\documentclass[12pt]{amsart}
\usepackage{amscd, amssymb, graphics}
\usepackage{amsfonts}
\usepackage{amsmath}
\usepackage{amsxtra}
\usepackage{latexsym}
\usepackage[mathcal]{eucal}
\usepackage{graphics,colortbl}
\usepackage{tikz-cd}  
\input xy
\xyoption{all}
\usepackage{epsfig}
\usepackage[pdftex, bookmarks, colorlinks, breaklinks]{hyperref}

\newtheorem{theorem}{Theorem}[section]

\newtheorem{corollary}[theorem]{Corollary}
\newtheorem{lemma}[theorem]{Lemma}
\newtheorem{proposition}[theorem]{Proposition}
\theoremstyle{definition}

\newtheorem{question}[theorem]{Question}
\newtheorem{definition}[theorem]{Definition}
\numberwithin{equation}{section}
\newtheorem{claim}{Claim}
\newtheorem*{theoremA}{Theorem A}
\newtheorem*{theoremB}{Theorem B}
\theoremstyle{remark}
\newtheorem{remark}[theorem]{Remark}
\newtheorem{example}[theorem]{Example}

\newcommand{\ben}{\begin{enumerate}}
\newcommand{\een}{\end{enumerate}}
\newcommand{\bit}{\begin{itemize}}
\newcommand{\eit}{\end{itemize}}

\def\I{{\mathbb{I}}}

\def\U{{\mathcal{U}}}

\def\QED{\nobreak\quad\ifmmode\text{Q.E.D.}\else{\rm Q.E.D.}\fi}

\def\U{{\mathcal{U}}}

\begin{document}

\title[Shadowing in Dynamical Systems]{Shadowing in Dynamical Systems: Zero-dimensional Extensions and Inverse Limits}

\author[D. Peng]{Dekui Peng}

 \address[D. Peng]
	{\hfill\break Institute of Mathematics,
		\hfill\break Nanjing Normal University, 210023,
	\hfill\break China}
\email{pengdk10@lzu.edu.cn}

\subjclass[2020]{Primary 37B65; Secondary 37B10, 54B35.}

\keywords{Shadowing property; symbolic dynamics; shifts of finite type; inverse limits; zero-dimensional extensions; compact dynamical systems}

\maketitle	
\setcounter{tocdepth}{1}

\begin{abstract}
Shifts of finite type (SFTs) play a central role in the theory of shadowing. Good and Meddaugh showed that SFTs serve as basic building blocks in the structural theory of shadowing systems; in particular, every compact metric dynamical system with shadowing is a factor of the inverse limit of an inverse sequence consisting of SFTs. We first show that, for this factor representation alone, neither shadowing nor metrizability is needed: every compact Hausdorff dynamical system is a factor of the inverse limit of an inverse system consisting of SFTs.

Thus, being a factor of an SFT inverse limit is not the structural feature genuinely forced by shadowing. What shadowing provides is stronger stability: in the metric case, every compact shadowing system is a factor of the inverse limit of an inverse sequence of SFTs with surjective bonding maps. Hence the associated zero-dimensional extension still has shadowing. We also prove that every compact Hausdorff shadowing system is conjugate to an inverse limit of metrizable shadowing systems with factor bonding maps. 
\end{abstract}

\maketitle	
\setcounter{tocdepth}{1}

\section{Introduction}

The shadowing property, also known as the pseudo-orbit tracing property, is one
of the standard stability properties in dynamical systems. Roughly speaking, it
says that approximate orbits can be traced by genuine orbits. The notion goes
back to the study of hyperbolic dynamics and stability; see, for example,
Walters \cite{Walters}, Aoki and Hiraide \cite{AH}, and Pilyugin \cite{Pil}.
The behaviour of shadowing under inverse-limit constructions has also been
studied from early on; see, for instance, Chen and Li \cite{CL}. Several
variants, such as asymptotic shadowing and s-limit shadowing, have also been
investigated; see \cite{GOP,MS}.

Symbolic dynamics provides the basic model case. A classical theorem of Walters
\cite{Walters} says that a shift space over a finite alphabet has shadowing if
and only if it is a shift of finite type. Thus shifts of finite type, or SFTs,
are the natural symbolic objects associated with shadowing. Standard references
for symbolic dynamics include Lind and Marcus \cite{LM} and Kitchens \cite{Kit}.

A substantial development was made by Good and Meddaugh \cite{GM}, who showed
that SFTs are not merely examples of shadowing systems but also serve as
building blocks for general shadowing systems through inverse limits. Their
approach uses a topological formulation of shadowing in terms of finite open
covers, which makes sense on arbitrary compact Hausdorff spaces. One of their
basic preservation results is the following.

\begin{theorem}\cite[Theorem 8]{GM}\label{thm:GM8}
Let an inverse system consist of dynamical systems on compact Hausdorff spaces,
each of which has the shadowing property. If the inverse system satisfies the
Mittag-Leffler condition, then its inverse limit also has the shadowing property.
\end{theorem}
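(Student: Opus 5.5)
The plan is to work with the open-cover formulation of shadowing from \cite{GM}. Recall it: $(X,f)$ has shadowing if for every finite open cover $\mathcal U$ there is a finite open cover $\mathcal V$ such that every $\mathcal V$-pseudo-orbit is $\mathcal U$-shadowed by a true orbit. Here a $\mathcal V$-pseudo-orbit is a sequence $(x_n)$ such that for each $n$ some $V\in\mathcal V$ contains both $f(x_n)$ and $x_{n+1}$. The orbit of $z$ $\mathcal U$-shadows $(x_n)$ if for each $n$ some $U\in\mathcal U$ contains both $f^n(z)$ and $x_n$.

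Let $(X_\lambda,f_\lambda)_{\lambda\in\Lambda}$ be the inverse system, with bonding maps $g_{\lambda\mu}$ for $\mu\ge\lambda$, and let $X=\varprojlim X_\lambda$ with projections $\pi_\lambda$. First I will reduce to covers pulled back from a single coordinate. Basic open sets $\pi_\lambda^{-1}(W)$ form a base, closed under finite intersections by directedness. Hence by compactness every finite open cover of $X$ is refined by $\pi_\lambda^{-1}(\mathcal W)$ for some $\lambda$ and some finite open cover $\mathcal W$ of $X_\lambda$. So it suffices to shadow with respect to covers of the form $\pi_\lambda^{-1}\mathcal W$.

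Fix $\lambda$ and $\mathcal W$. The Mittag-Leffler condition gives $\mu\ge\lambda$ such that $g_{\lambda\mu}(X_\mu)=g_{\lambda\nu}(X_\nu)$ for all $\nu\ge\mu$. By the standard compactness argument, this common image equals $\pi_\lambda(X)$. Now apply shadowing in $X_\mu$ to the cover $g_{\lambda\mu}^{-1}(\mathcal W)$. This yields a finite open cover $\mathcal V_\mu$ of $X_\mu$ such that every $\mathcal V_\mu$-pseudo-orbit in $X_\mu$ is $g_{\lambda\mu}^{-1}\mathcal W$-shadowed by some $y\in X_\mu$. Set $\mathcal V=\pi_\mu^{-1}\mathcal V_\mu$.

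Now take a $\mathcal V$-pseudo-orbit $(x_n)$ in $X$. Its projection $(\pi_\mu x_n)$ is a $\mathcal V_\mu$-pseudo-orbit in $X_\mu$, since $\pi_\mu$ is equivariant. So it is shadowed by some $y\in X_\mu$. Then $z_\lambda:=g_{\lambda\mu}(y)$ lies in $g_{\lambda\mu}(X_\mu)=\pi_\lambda(X)$, so $z_\lambda=\pi_\lambda(z)$ for some $z\in X$. Using equivariance of $g_{\lambda\mu}$, for each $n$ the points $\pi_\lambda f^n(z)=f_\lambda^n(g_{\lambda\mu}y)$ and $\pi_\lambda(x_n)$ lie in a common member of $\mathcal W$. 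Hence $f^n(z)$ and $x_n$ lie in a common member of $\pi_\lambda^{-1}\mathcal W$, which is exactly the shadowing required.

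The main obstacle is the step $g_{\lambda\mu}(X_\mu)=\pi_\lambda(X)$: lifting a point in the stable image to a genuine thread of the inverse limit. I will prove it with a compactness argument. For $z_\lambda$ in the stable image, consider the sets $F_\nu=\{x\in\prod X_\kappa : x_\lambda=z_\lambda,\ x_\alpha=g_{\alpha\beta}x_\beta \text{ for } \alpha\le\beta\le\nu\}$. Each $F_\nu$ is closed, and $F_\nu$ is nonempty: by Mittag-Leffler, $z_\lambda$ lifts to some $x_\nu\in X_\nu$, and we push it down. The $F_\nu$ have the finite intersection property by directedness, so Tychonoff gives a point in their intersection, which is the desired thread. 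Only Hausdorff compactness is used, so no metrizability is needed.
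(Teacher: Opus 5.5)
The paper quotes this result from Good and Meddaugh without reproving it. Your argument is correct and follows the standard Good--Meddaugh route: pull the cover back from a single coordinate $\lambda$, pass to the ML-stabilizing index $\mu$ (where compactness gives $g_{\lambda\mu}(X_\mu)=\pi_\lambda(X)$), shadow in $X_\mu$ with respect to $g_{\lambda\mu}^{-1}(\mathcal W)$, and lift $g_{\lambda\mu}(y)$ to a thread.

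One small detail needs fixing in your reduction step. The sets $g^{-1}(W)$ produced by compactness cover only $\pi_\lambda(X)$, not all of $X_\lambda$. Either adjoin the open set $X_\lambda\setminus\pi_\lambda(X)$, which is open since $\pi_\lambda(X)$ is compact and whose preimage in $X$ is empty, or note that $g_{\lambda\mu}^{-1}(\mathcal W)$ still covers $X_\mu$ because $g_{\lambda\mu}(X_\mu)=\pi_\lambda(X)$.
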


The Mittag-Leffler condition is therefore the stability condition that prevents
pseudo-orbits from disappearing at finer levels of an inverse system. Recall
that a topological space is called \emph{zero-dimensional} if it has a base
consisting of clopen sets, that is, sets which are both closed and open. For
compact Hausdorff spaces, zero-dimensionality is equivalent to total
disconnectedness. In this setting, Good and Meddaugh obtained a complete
characterization.

\begin{theorem}\cite[Theorem 18]{GM}\label{thm:GM18}
Let $X$ be a compact, totally disconnected Hausdorff space, and let
$f:X\to X$ be continuous. Then $f$ has the shadowing property if and only if
$(X,f)$ is topologically conjugate to the inverse limit of a Mittag-Leffler
inverse system of shifts of finite type.
\end{theorem}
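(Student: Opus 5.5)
We prove the two implications separately. Throughout we use the open-cover formulation of shadowing. For a finite open cover $\mathcal U$, a sequence $(x_i)$ is a \emph{$\mathcal U$-pseudo-orbit} if for every $i$ some $U\in\mathcal U$ contains both $f(x_i)$ and $x_{i+1}$. It is \emph{$\mathcal V$-shadowed} by $z$ if for every $i$ some $V\in\mathcal V$ contains both $x_i$ and $f^i(z)$.

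\textbf{Sufficiency.} Suppose $(X,f)$ is conjugate to the inverse limit of a Mittag-Leffler inverse system of SFTs. By Walters' theorem every SFT has shadowing. Theorem~\ref{thm:GM8} then shows that the inverse limit has shadowing. Shadowing is a conjugacy invariant, so $f$ has shadowing.

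\textbf{Necessity.} Assume $f$ has shadowing and $X$ is compact, zero-dimensional and Hausdorff.

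\begin{enumerate}
\item \emph{Index set.} Let $\Lambda$ be the directed set of finite partitions $\mathcal P$ of $X$ into clopen sets, ordered by refinement. Because $X$ is zero-dimensional, these partitions are cofinal among all finite open covers.

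\item \emph{Symbolic codings.} For each $\mathcal P$, use shadowing to choose a finer clopen partition $\mathcal Q=\phi(\mathcal P)$ such that every $\mathcal Q$-pseudo-orbit is $\mathcal P$-shadowed. Take the alphabet $\mathcal Q$ and define the 1-step SFT
\[
\Sigma_{\mathcal P}=\{(Q_i)\in\mathcal Q^{\mathbb N}: f(Q_i)\cap Q_{i+1}\neq\emptyset \text{ for all } i\}.
\]
Its allowed transitions are exactly the pairs $(Q,Q')$ with $f(Q)\cap Q'\neq\emptyset$.

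\item \emph{Bonding maps.} For $\mathcal P\le\mathcal P'$ we need a shift-commuting map $\Sigma_{\mathcal P'}\to\Sigma_{\mathcal P}$. If $\phi$ is chosen monotone, so that $\phi(\mathcal P')$ refines $\phi(\mathcal P)$, each letter maps to the unique cell containing it. This sends allowed transitions to allowed transitions. Monotonicity can be arranged by replacing $\phi(\mathcal P)$ with a common refinement over the finitely many $\mathcal P''\le\mathcal P$. For this to make sense, restrict the index set to a suitable directed subfamily, or build the system one partition at a time.

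\item \emph{Comparison with $X$.} Consider the natural system of itineraries: $\mathcal P$-coding of genuine orbits gives a subshift $X_{\mathcal P}$, and $(X,f)=\varprojlim X_{\mathcal P}$ because the partitions separate points. Shadowing gives the inclusions
\[
X_{\phi(\mathcal P)}\subseteq\Sigma_{\mathcal P}, \qquad \pi(\Sigma_{\mathcal P})\subseteq X_{\mathcal P}\ \text{(up to the coarsening to $\mathcal P$).}
\]
Indeed, a point of $\Sigma_{\mathcal P}$ is a $\mathcal Q$-pseudo-orbit choice, which is $\mathcal P$-shadowed by a true orbit, so its image in $\mathcal P$-letters is the itinerary of that orbit.

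\item \emph{Interleaving.} The interleaving $X_{\phi(\mathcal P)}\to\Sigma_{\mathcal P}\to X_{\mathcal P}$ shows that $\varprojlim\Sigma_{\mathcal P}\cong\varprojlim X_{\mathcal P}\cong X$, by a standard cofinality argument.
\end{enumerate}

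\textbf{Mittag-Leffler condition.} This is the main obstacle. We must show that for each $\mathcal P$ the images of $\Sigma_{\mathcal P'}$ in $\Sigma_{\mathcal P}$ stabilize as $\mathcal P'$ grows. Here we aim to use the interleaving: the image of $\Sigma_{\phi(\mathcal P')}$ should be squeezed between the image of $X$ and itself. The point requiring care is that the image in $\Sigma_{\mathcal P}$ of a deep $\Sigma_{\mathcal P'}$ equals the image of $X$, namely the $\phi(\mathcal P)$-itineraries of real orbits. Since the letters of $\Sigma_{\mathcal P}$ are $\phi(\mathcal P)$-cells, this requires the shadowing in step (2) to be strong enough to recover the $\phi(\mathcal P)$-cells themselves, not just $\mathcal P$-cells.

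If the images do not stabilize directly, the fallback is to replace each $\Sigma_{\mathcal P}$ by its eventual image. This is still an SFT, because it is the image of an SFT sitting between two SFT-level codings. Alternatively, one can define the inverse system on $\{X_{\mathcal P}\}$ and show directly that each $X_{\mathcal P}$ is an SFT using shadowing. In that version the bonding maps are surjective, so Mittag-Leffler is automatic. This second route is the one I would try first.
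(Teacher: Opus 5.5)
Your sufficiency direction is correct. So is the conjugacy $\varprojlim\Sigma_{\mathcal P}\cong X$ obtained by interleaving with the orbit spaces; this is Good and Meddaugh's original route, whereas the paper proves $\varprojlim\mathcal{PO}(\mathcal U)\cong X$ directly, with no shadowing hypothesis.

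\textbf{The gap is the Mittag-Leffler condition.} You leave it unproved, although it is the only place where shadowing is indispensable for necessity. Your worry that shadowing must ``recover the $\phi(\mathcal P)$-cells themselves'' disappears if you go one level deeper and apply shadowing to $\phi(\mathcal P)$, whose witness is $\phi(\phi(\mathcal P))$. Your own interleaving already does this. For every $\mathcal P'$ refining $\phi(\mathcal P)$, the bonding map $\Sigma_{\mathcal P'}\to\Sigma_{\mathcal P}$ factors as
\[
\Sigma_{\mathcal P'}\to\Sigma_{\phi(\mathcal P)}\to X_{\phi(\mathcal P)}\hookrightarrow\Sigma_{\mathcal P},
\]
so its image lies in $\mathcal O(\phi(\mathcal P))$. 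Conversely, the image contains $\iota^{\phi(\mathcal P')}_{\phi(\mathcal P)}(\mathcal O(\phi(\mathcal P')))=\mathcal O(\phi(\mathcal P))$ by Lemma~\ref{opo}. Hence the images stabilize at $\mathcal O(\phi(\mathcal P))$. This is exactly the paper's argument: by Lemma~\ref{sur}, if $\mathcal W$ refines a partition $\mathcal V$ witnessing $\mathcal U$-shadowing, then $\varphi_{\mathcal U}^{\mathcal W}(\mathcal{PO}(\mathcal W))=\mathcal O(\mathcal U)$.

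\textbf{Your fallbacks should be discarded, because both rest on false claims.} A shift-commuting image of an SFT is in general only sofic, so nothing makes the eventual image an SFT. Likewise, the orbit spaces $X_{\mathcal P}$ of a shadowing system need not be SFTs. For example, let $X$ be the one-sided edge shift of the graph with vertices $A,B$ and edges $A\to A$, $A\to B$, $B\to A$. It is an SFT, hence has shadowing. Partition $X$ according to whether the first edge is the loop. The resulting orbit space is the one-sided even shift, which is strictly sofic.

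Even the weaker goal of finding cofinally many $\mathcal P$ with $X_{\mathcal P}$ an SFT would, for metrizable $X$, give an inverse sequence of SFTs with surjective bonding maps conjugate to $X$. The paper explicitly leaves this open in the remark following Corollary~\ref{metc}. So your ``second route'' is not the routine step you present it as.
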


This theorem belongs to a broader line of work on shadowing in
zero-dimensional and symbolic settings. Kawaguchi \cite{Kaw} studied
topological stability and shadowing for zero-dimensional dynamical systems.
Darji, Gon\c{c}alves and Sobottka \cite{Darji} extended the inverse-limit
approach to complete ultrametric spaces, using shifts of finite order over
countable alphabets. Their proof of the main representation theorem follows,
in essence, the strategy of \cite[Theorem~18]{GM}: one first obtains an
inverse-limit representation by orbit spaces and then uses finite shadowing
to replace these orbit spaces by pseudo-orbit spaces. Lin, Chen and Zhou
\cite{LCZ} considered shadowing for systems of countable group actions, and
Doucha \cite{Dou} recently related shadowing, symbolic dynamics and the strong
topological Rokhlin property for countable group actions.

For compact metric spaces which are not necessarily zero-dimensional, Good and
Meddaugh proved the following representation theorem.

\begin{theorem}\cite[Theorem 20]{GM}\label{thm:GM20}
Let $X$ be a compact metric space and let $f:X\to X$ be a continuous map with
the shadowing property. Then $(X,f)$ is a factor of the inverse limit of an
inverse sequence of shifts of finite type.
\end{theorem}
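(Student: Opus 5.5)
The plan is to build the SFTs directly from finite open covers and to use shadowing to make a natural map from a pseudo-orbit space onto $X$. Fix a sequence of finite open covers $\mathcal U_1,\mathcal U_2,\dots$ of $X$ with $\operatorname{mesh}(\mathcal U_n)\to 0$. By shadowing, choose $\delta_n>0$ so that every $\delta_n$-pseudo-orbit is $\epsilon_n$-shadowed, where $\epsilon_n\to 0$. Then choose finite open covers $\mathcal V_n$ of mesh less than $\delta_n$, each refining the previous one: $\mathcal V_{n+1}\prec\mathcal V_n$. Index the members as $\mathcal V_n=\{V^n_1,\dots,V^n_{k_n}\}$.

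On the alphabet $\{1,\dots,k_n\}$, let $\Sigma_n$ be the one-step SFT given by the transition rule
\[
i\to j \text{ is allowed if and only if } f(\overline{V^n_i})\cap \overline{V^n_j}\neq\emptyset .
\]
Fix a refinement map $r_n:\{1,\dots,k_{n+1}\}\to\{1,\dots,k_n\}$ with $V^{n+1}_i\subseteq V^n_{r_n(i)}$. Applying $r_n$ coordinatewise preserves allowed transitions, since closures are nested. It therefore induces a shift-commuting map $\Sigma_{n+1}\to\Sigma_n$, and $(\Sigma_n)$ becomes an inverse sequence of SFTs with inverse limit $\Sigma_\infty$.

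Next I define $\pi:\Sigma_\infty\to X$. Take a thread $(w^n)_n$. For each $n$, pick points $x_k\in \overline{V^n_{w^n_k}}$. Adjacent symbols force $d(f(x_k),x_{k+1})$ to be at most about $2\operatorname{mesh}(\mathcal V_n)$, so after adjusting constants (take mesh less than $\delta_n/3$, say) the sequence $(x_k)$ is a $\delta_n$-pseudo-orbit. Hence some $y_n$ satisfies $f^k(y_n)\in B(\overline{V^n_{w^n_k}},\epsilon_n)$ for all $k\ge 0$. Call the set of such $y$ the set $K_n(w)$. It is closed, nonempty, and $\epsilon_n+\operatorname{mesh}$-close to the itinerary. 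Because the threads are compatible, the itineraries at level $n+1$ lie inside those at level $n$. Define
\[
A_n(w)=\{y : f^k(y)\in \overline{B}(\overline{V^m_{w^m_k}},\epsilon_m+\operatorname{mesh}\mathcal V_m) \text{ for all } k\ge 0 \text{ and all } m\le n\}.
\]
These sets are nonempty (they contain $y_n$), closed and decreasing. Their intersection $A(w)$ has the property that $y\in A(w)$ forces $f^0(y)$ to lie within $\epsilon_m+2\operatorname{mesh}$ of $\overline{V^m_{w^m_0}}$ for every $m$, so $A(w)$ is a single point $\pi(w)$.

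It remains to check three things. For continuity: if $w,w'$ agree on level $m$ in coordinate $0$, then $\pi(w),\pi(w')$ both lie near $\overline{V^m_{w^m_0}}$, hence are within $2(\epsilon_m+2\operatorname{mesh}\mathcal V_m)$ of each other. For equivariance: the characterizing conditions for $\pi(\sigma w)$ are exactly those for $f(\pi(w))$. For surjectivity: given $x\in X$, choose compatible itineraries $w^n_k$ with $f^k(x)\in V^n_{w^n_k}$. This can be done by taking the inverse limit of the nonempty finite sets of valid choices, which exists by compactness, or by choosing at each level the cover element containing $f^k(x)$ compatibly via the $r_n$, after first fixing choices at the finest level. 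Then $x\in A(w)$.

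The main obstacle is making the definition of $\pi$ independent of choices and genuinely single-valued while tolerating that shadowing only gives approximate points. The nested closed sets $A_n(w)$ handle this, so the delicate point is really the compatibility step ensuring $A_n(w)\neq\emptyset$. Since $y_n$ is only guaranteed close at level $n$, I will check that the level-$n$ tolerance $\epsilon_n+\operatorname{mesh}\mathcal V_n$ is dominated by the level-$m$ tolerance for $m\le n$. I will arrange this by choosing $\epsilon_n$ and the meshes decreasing, which is what the nested refinements $\mathcal V_{n+1}\prec\mathcal V_n$ buy.
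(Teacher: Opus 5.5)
Your argument is correct once two small repairs are made (second paragraph). It follows a different route from the paper, and it proves more than you claim, because the shadowing step is never needed.

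For a thread $w$, the fixed refinement maps make $\overline{V^{n}_{w^{n}_k}}$ a decreasing sequence of nonempty compact sets whose diameters tend to $0$. Hence $\bigcap_n\overline{V^n_{w^n_k}}=\{x_k\}$. Now pick $p_n\in\overline{V^n_{w^n_k}}$ with $f(p_n)\in\overline{V^n_{w^n_{k+1}}}$, which the transition rule allows, and let $n\to\infty$. Continuity gives $f(x_k)=x_{k+1}$. So $x_0$ already lies in $A(w)$, and $\pi(w)=x_0$; the ``main obstacle'' you identify disappears.

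Stripped of shadowing, your construction is an open-cover version of the surjectivity argument in Theorem~\ref{zerolift}. It gives the metric case of Corollary~\ref{coro:fac} in one step, with no detour through a zero-dimensional extension in a Cantor cube. This matches the paper's point that the bare factor representation is not a shadowing phenomenon.

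The paper's two routes buy different things. The clopen-partition route also covers non-metrizable $X$: refinement maps between partitions are canonical, hence automatically coherent over a non-linear directed set. You had to fix the $r_n$ by hand along a sequence, and that has no obvious coherent analogue for a general directed family of covers. Theorem~\ref{Zero-ext} is where shadowing genuinely enters. It replaces covers by $\eta$-nets with vertical compatibility relations so that the bonding maps are surjective. That gives the Mittag-Leffler condition and a zero-dimensional extension which itself has shadowing. Your bonding maps $\Sigma_{n+1}\to\Sigma_n$ carry no such surjectivity, so your construction gives only the weaker statement.

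\medskip

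\noindent\textbf{Two repairs.}

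\emph{Pseudo-orbit estimate.} The bound $d(f(x_k),x_{k+1})\lesssim 2\operatorname{mesh}\mathcal V_n$ is false for arbitrary $x_k\in\overline{V^n_{w^n_k}}$ unless $f$ is Lipschitz. You can fix it in either of two ways. One is to bound it by $\operatorname{mesh}\mathcal V_n+\omega_f(\operatorname{mesh}\mathcal V_n)$, where $\omega_f$ is a modulus of uniform continuity of $f$, and choose the meshes accordingly. The other is to choose $x_k\in\overline{V^n_{w^n_k}}\cap f^{-1}\bigl(\overline{V^n_{w^n_{k+1}}}\bigr)$, so that $f(x_k)$ and $x_{k+1}$ lie in one closed set of diameter at most $\operatorname{mesh}\mathcal V_n$.

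\emph{Surjectivity.} The alternative of ``fixing choices at the finest level'' does not make sense, since there is no finest level. The compactness (K\"onig) option does work, because $r_n$ maps $\{i: f^k(x)\in V^{n+1}_i\}$ into $\{i: f^k(x)\in V^n_i\}$. You should state this inclusion explicitly.
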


Theorem \ref{thm:GM20} is an important bridge from zero-dimensional dynamics to
general compact metric dynamics. However, the inverse sequence obtained there is
not known to satisfy the Mittag-Leffler condition. Indeed, the bonding maps
arising from cover refinements are not necessarily surjective. Thus
Theorem \ref{thm:GM20} does not by itself provide a zero-dimensional extension
which still has shadowing.

One purpose of the present paper is to separate the soft part of this phenomenon
from the genuinely shadowing-theoretic part. In Section \ref{revisit}, we revisit the orbit
and pseudo-orbit spaces of Good and Meddaugh in the zero-dimensional setting,
using finite clopen partitions instead of arbitrary finite open covers. We show
that every compact zero-dimensional Hausdorff system is conjugate to the inverse
limit of an inverse system of SFTs; when the original system has shadowing, this
inverse system may be chosen to satisfy the Mittag-Leffler condition. Combining
this with the elementary fact that every compact Hausdorff dynamical system is a
factor of a compact zero-dimensional one, we obtain (see Corollary \ref{coro:fac}):
\[
\text{\emph{every compact Hausdorff dynamical system is a factor of an inverse limit
of SFTs.}}
\]

Thus the existence of an SFT inverse-limit factor representation is not, by
itself, a feature of shadowing. The essential issue is whether the inverse system
can be chosen with an additional stability condition, such as the
Mittag-Leffler condition, or even with surjective bonding maps.

This is the main point of Section \ref{s2}. For compact metric systems with
shadowing, we construct an inverse sequence of SFTs using finite $\eta$-nets
rather than open-cover substitutions. The shadowing property is used to lift
pseudo-orbits at a coarse level to genuine orbits in the original system, and
these genuine orbits can then be approximated at all finer levels. This yields
surjective bonding maps.

\begin{theoremA}
\emph{Let $(X,d)$ be a compact metric space and let $f:X\to X$ be a continuous map
with the shadowing property. Then there exists an inverse sequence
$\{Y_N,p_N^{N+1}\}$ of shifts of finite type with surjective bonding maps such
that $(X,f)$ is a factor of its inverse limit.}
\end{theoremA}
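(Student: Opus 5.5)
The plan is to build each $Y_N$ as a one-step SFT whose letters are finite \emph{towers} of net points at scales $1,\dots,N$, and to take the bonding map $p_N^{N+1}$ to be the $1$-block code that forgets the finest coordinate. Surjectivity then comes down to extending a point of $Y_N$ by one more coordinate, and this is exactly where shadowing is used.

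\textbf{Choice of parameters.} Fix $\varepsilon_N\downarrow 0$ with $\sum_N \varepsilon_N<\infty$. By the shadowing property, choose $\delta_N\le\varepsilon_N$, decreasing, such that every $\delta_N$-pseudo-orbit is $\varepsilon_N$-shadowed by a genuine orbit. By uniform continuity of $f$, choose $\eta_N<\delta_N/3$ such that $d(a,b)<\eta_N$ implies $d(f(a),f(b))<\delta_N/3$. Let $E_N\subset X$ be a finite $\eta_N$-net.

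\textbf{Definition of $Y_N$ and of the bonding maps.} The alphabet $\mathcal A_N$ consists of tuples $(e_1,\dots,e_N)$ with $e_k\in E_k$ and $d(e_k,e_{k+1})<\varepsilon_k+\eta_{k+1}$ for all $k<N$. The space $Y_N\subset\mathcal A_N^{\mathbb N_0}$ (or $\mathbb Z$, following the paper's convention) consists of the sequences $(a^i)_i$, $a^i=(e^i_1,\dots,e^i_N)$, such that for every $k\le N$ and every $i$,
\[ d\bigl(f(e^i_k),e^{i+1}_k\bigr)<\delta_k .\]
This is a condition on consecutive pairs of letters, so $Y_N$ is a one-step SFT. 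The map $p_N^{N+1}$ deletes the $(N+1)$-st coordinate of each letter. It clearly maps $Y_{N+1}$ into $Y_N$ and commutes with the shifts.

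\textbf{Surjectivity of the bonding maps.} This is the step where shadowing enters, and it is the heart of the proof. Let $y\in Y_N$. Its $N$-th coordinates $(e^i_N)_i$ form a $\delta_N$-pseudo-orbit, so there is $x$ with $d(f^i x,e^i_N)<\varepsilon_N$ for all $i$. Let $e^i_{N+1}\in E_{N+1}$ be a point with $d(e^i_{N+1},f^ix)<\eta_{N+1}$.
\begin{itemize}
\item Compatibility holds: $d(e^i_N,e^i_{N+1})<\varepsilon_N+\eta_{N+1}$.
\item The pseudo-orbit condition holds:
\[ d\bigl(f(e^i_{N+1}),e^{i+1}_{N+1}\bigr)\le d\bigl(f(e^i_{N+1}),f^{i+1}x\bigr)+d\bigl(f^{i+1}x,e^{i+1}_{N+1}\bigr)<\delta_{N+1}/3+\eta_{N+1}<\delta_{N+1}.\]
\end{itemize}
Hence the extended sequence lies in $Y_{N+1}$ and maps onto $y$. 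The main point to verify carefully is that the compatibility threshold $\varepsilon_N+\eta_{N+1}$ is at once loose enough to allow this extension and summable, which the previous step needs.

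\textbf{The factor map.} Let $Y=\varprojlim Y_N$. A point of $Y$ amounts to an infinite tower $(e^i_k)_{k\ge1}$ at each time $i$. By the compatibility bound and summability, $(e^i_k)_k$ is Cauchy, uniformly in $i$; set $x_i=\lim_k e^i_k$ and define $\pi(y)=x_0$.
\begin{itemize}
\item Continuity: $x_0$ is determined up to $\sum_{k\ge K}(\varepsilon_k+\eta_{k+1})$ by the first $K$ coordinates of the time-$0$ letter.
\item Equivariance: letting $k\to\infty$ in $d(f(e^i_k),e^{i+1}_k)<\delta_k\to0$ gives $f(x_i)=x_{i+1}$, so $\pi\circ\sigma=f\circ\pi$.
\item Surjectivity: for $x\in X$, choose $e^i_k\in E_k$ within $\eta_k$ of $f^ix$. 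Then $d(e^i_k,e^i_{k+1})<\eta_k+\eta_{k+1}\le\varepsilon_k+\eta_{k+1}$, and the pseudo-orbit condition follows as in the surjectivity step. This gives a point of $Y$ whose image under $\pi$ is $x$.
\end{itemize}
Thus $(X,f)$ is a factor of the inverse limit of the sequence $\{Y_N,p_N^{N+1}\}$ of SFTs with surjective bonding maps.
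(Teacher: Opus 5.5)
Your proposal is correct and is essentially the paper's proof. It uses the same finite $\eta_N$-nets subordinate to the shadowing scales $\delta_N$, the same row-deleting bonding maps made surjective by shadowing the last row and re-approximating the shadowing orbit in the next net, and the same Cauchy-limit factor map. Your vertical threshold $\varepsilon_k+\eta_{k+1}$, in place of the paper's $2\varepsilon_k$, is only a cosmetic difference.

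Drop the parenthetical ``or $\mathbb Z$'', though. The paper works with one-sided sequences indexed by $\omega$. A two-sided version would break surjectivity of $\pi$ whenever $f$ is not onto, since a point outside $f(X)$ has no backward orbit.
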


This is Theorem \ref{Zero-ext} below. Since surjective bonding maps imply the
Mittag-Leffler condition, Theorem \ref{thm:GM8} immediately gives a
zero-dimensional compact metric extension which itself has shadowing. In
particular, our result strengthens Theorem \ref{thm:GM20}: the SFT inverse
sequence can be chosen so that no pseudo-orbits are lost at finer symbolic
levels. It also completes the metric characterization in \cite[Corollary 28]{GM}
in terms of ALP factors of Mittag-Leffler inverse sequences of SFTs; see
Corollary \ref{metc}.

In Section \ref{s4}, we turn to arbitrary compact Hausdorff spaces.
Using the unique uniformity of a compact Hausdorff space, we construct metrizable
shadowing factors with prescribed uniform control. This leads to the following
non-metric approximation theorem.

\begin{theoremB}
\emph{Let $(X,f)$ be a compact dynamical system. If $f$ has the shadowing property,
then $(X,f)$ is conjugate to the inverse limit of an inverse system of
metrizable dynamical systems with the shadowing property and factor bonding
maps.}
\end{theoremB}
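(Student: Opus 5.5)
The plan is to realize $(X,f)$ as the inverse limit of its metrizable factors. Each such factor is obtained by pulling back a countable family of pseudometrics, and the family is chosen so that shadowing survives in the factor.

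Recall the cover formulation of shadowing used by Good and Meddaugh. For every finite open cover $\mathcal U$ there is a finite open cover $\mathcal V$ such that every $\mathcal V$-pseudo-orbit is $\mathcal U$-shadowed. Equivalently, in terms of the unique uniformity $\mathfrak D$ of $X$: for each entourage $E\in\mathfrak D$ there is an entourage $D\in\mathfrak D$ such that every $D$-pseudo-orbit is $E$-shadowed.

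\textbf{Step 1: constructing one metrizable shadowing factor.} Fix a continuous pseudometric $\rho_0$ on $X$ (every continuous function $X\to\mathbb R$ yields one). Build inductively continuous pseudometrics $\rho_0\le\rho_1\le\cdots$ with two requirements.
\begin{enumerate}
\item Each $\rho_{n+1}$ dominates $\rho_n\circ(f\times f)$. This gives invariance: $f$ will be uniformly continuous for the limit pseudometric.
\item Shadowing transfers between levels. By shadowing in $X$, there is an entourage $D_n$ such that every $D_n$-pseudo-orbit is $\{\rho_n<2^{-n}\}$-shadowed. Choose $\rho_{n+1}$ so that $\{\rho_{n+1}<\delta_n\}\subseteq D_n$ for some $\delta_n>0$; this is possible because every entourage contains a basic entourage of some continuous pseudometric.
\end{enumerate}
Set $\rho=\sum_n 2^{-n}\min(\rho_n,1)$ and let $X_\rho=X/\{\rho=0\}$. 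This is a compact metric space, $f$ descends to a map $f_\rho$, and the quotient map $X\to X_\rho$ is a factor map.

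\textbf{Step 2: verifying shadowing in $X_\rho$.} Let $\epsilon>0$ and pick $n$ with $2^{-n}$ small compared to $\epsilon$. A $\delta$-pseudo-orbit in $X_\rho$ with $\delta$ small lifts to a sequence $(x_i)$ in $X$ with $\rho_{n+1}(f(x_i),x_{i+1})<\delta_n$, after adjusting via the $\rho$-comparisons. By the choice of $\rho_{n+1}$, this sequence is a $D_n$-pseudo-orbit. By shadowing in $X$ it is $\rho_n$-shadowed, and hence $\rho$-shadowed up to $\epsilon$, because the tail of the series defining $\rho$ is small.

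\textbf{Step 3: the main obstacle.} The lift in Step 2 is the delicate point. It uses only that the quotient identifies $\rho$-close points, and $\rho_{n+1}$ is controlled by $\rho$ up to a factor $2^{n+1}$. This needs care but should work.

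\textbf{Step 4: directed system and conjugacy.} The pseudometrics built in Step 1 form a directed family: given two such $\rho,\rho'$, rerun the construction starting from $\rho_0=\rho+\rho'$ to obtain one dominating both. Each such $\rho$ yields a metrizable shadowing factor, and for $\rho\le C\rho'$ there are natural factor bonding maps $X_{\rho'}\to X_\rho$. This gives an inverse system whose bonding maps are factor maps. The map $X\to\varprojlim X_\rho$ is continuous, commutes with the dynamics, and is surjective by compactness. It is injective because continuous pseudometrics separate points of a compact Hausdorff space, and every continuous pseudometric is dominated by one from the family. Therefore the map is a conjugacy, which proves Theorem B.
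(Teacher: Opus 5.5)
Your argument is correct, and it reaches Theorem B by a somewhat different route from the paper's.

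\textbf{The key lemma.} The paper's Lemma \ref{keylemma} builds a single pseudometric $\varrho$ from a chain of entourages $U_n$ with $3U_{n+1}\subseteq U_n$, via Engelking's metrization lemma. Each $U_{n+1}$ is chosen to witness shadowing of $\bigcap_{i\le n}(f\times f)^{-i}(U_n)$. Since $\varrho$ need not be $f$-invariant, the factor is taken to be the space of $\varrho$-itineraries with the orbit-sum metric $\sum_n 2^{-n}\varrho(f^nx,f^ny)$. The intersection over iterates is exactly what controls the first $k$ terms of that sum at the shadowing point.

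You instead build invariance into the pseudometrics themselves, for instance $\rho_{n+1}=\rho_n+\rho_n\circ(f\times f)+\sigma_n$ with $\{\sigma_n<1\}\subseteq D_n$. Then $X/\{\rho=0\}$ is already a dynamical quotient and $f_\rho$ is $2$-Lipschitz. Monotonicity replaces the intersection over iterates: shadowing at scale $\{\rho_n<2^{-n}\}$ bounds every $\rho_k$ with $k\le n$, which gives $\rho(x_i,f^iz)<3\cdot 2^{-n}$.

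\textbf{The global step.} The paper indexes by all metrizable shadowing factors up to conjugacy over $X$, ordered by factorization. It derives both directedness and point separation from Corollary \ref{Coro:shafac}: every metrizable factor factors through a metrizable shadowing one. You index by a concrete set of pseudometrics, ordered by Lipschitz domination, with bonding maps induced by the identity. You get directedness and separation by reseeding the construction with $\rho+\rho'$, or with an arbitrary continuous pseudometric. This carries the same content as that corollary: seed with $d_Y\circ(\psi\times\psi)$.

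Your version is more elementary and avoids the identification and uniqueness bookkeeping. The paper's packaging gives reusable statements (control by prescribed entourages $V_n$, and factoring of arbitrary metrizable factors) and a canonical index set.

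\textbf{Step 3.} This step is not actually delicate. Since $\rho$ is constant on classes, any choice of representatives works. The only care needed comes from the truncation $\min(\rho_{n+1},1)\le 2^{n+1}\rho$, which forces you to take $\delta<2^{-(n+1)}\min\{\delta_n,1\}$.
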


This is Theorem \ref{ThC}. Thus, although the metric construction with
surjective SFT bonding maps does not directly extend to arbitrary compact
Hausdorff spaces, every compact shadowing system can still be reconstructed from
metrizable shadowing factors.

The preceding results give the following conceptual picture. Good and
Meddaugh showed that the shadowing property is preserved under
Mittag-Leffler inverse limits and under ALP factor
maps (see \cite[Section 5]{GM}). Starting from shifts of finite type, Theorem A shows that
every compact metric shadowing system is obtained by first taking a
Mittag-Leffler inverse limit of SFTs and then passing to an ALP factor. The
non-metrizable case is obtained by one further inverse-limit step: by Theorem
B, every compact Hausdorff shadowing system is the inverse limit of
metrizable shadowing systems with factor bonding maps, and hence of systems
obtained in the previous way.

Thus, if SFTs are regarded as objects of level $0$, then compact
zero-dimensional shadowing systems lie at level at most $1$, compact metric
shadowing systems lie at level at most $2$, and arbitrary compact Hausdorff
shadowing systems lie at level at most $3$, where the allowed operations are
Mittag-Leffler inverse limits and ALP factor maps. This should be understood as
a structural upper bound rather than as a claim that the levels are strict.

Finally, the direct construction of Section~\ref{revisit} also applies beyond
the compact setting. In the last subsection of Section~\ref{final}, we
revisit Section~4.2 of Darji, Gon\c{c}alves and Sobottka \cite{Darji}.
Their representation theorem assumes finite shadowing in order to pass from
orbit spaces to pseudo-orbit spaces. We show instead that every continuous
self-map of a zero-dimensional second-countable space admitting a complete
defining sequence is directly conjugate to the inverse limit of its canonical
one-step pseudo-orbit shifts. If the defining sequence is tame and the map is
uniformly continuous, this conjugacy is uniform. Consequently, the finite
shadowing assumption in \cite[Theorem~4.2.8]{Darji} is unnecessary.
Moreover, for uniformly continuous maps on spaces with complete tame defining
sequences, shadowing is equivalent to the Mittag-Leffler condition for this
canonical inverse sequence. This removes the finite shadowing assumption from
the converse direction of \cite[Theorem~4.2.9(ii)]{Darji}.

Unless otherwise stated, all spaces are assumed to be Hausdorff, and all maps are
continuous.
We denote by $\omega$ the set of natural numbers including $0$; it will also be
identified with the first infinite cardinal. 

By a \emph{dynamical system} we
mean a pair $(X,f)$, where $X$ is a compact Hausdorff space and
$f:X\to X$ is a continuous map.
A \emph{morphism} from a dynamical system $(X,f)$ to a dynamical system
$(Y,g)$ is a continuous map $\pi:X\to Y$ such that $\pi\circ f=g\circ \pi$.
If, in addition, $\pi$ is onto, then $\pi$ is called a \emph{factor map}, and
$(Y,g)$ is called a \emph{factor} of $(X,f)$. If $\pi$ is a homeomorphism, then
we say that $(X,f)$ and $(Y,g)$ are \emph{conjugate}.

\subsection{Shadowing in Metric and Compact Spaces}
 Let us first recall the classical definition of the shadowing property on metric spaces.

\begin{definition}\label{shadmet}
Let $(X, d)$ be a compact metric space and let $f:X \to X$ be a continuous map. For $\delta > 0$, a sequence $\langle x_n : n \in \omega \rangle$ in $X$ is called a \emph{$\delta$-pseudo-orbit} if 
\[
d(f(x_n), x_{n+1}) < \delta
\]
for all $n \in \omega$. Given $\varepsilon > 0$, a point $x \in X$ is said to \emph{$\varepsilon$-shadow} the pseudo-orbit $\langle x_n \rangle$ if 
\[
d(f^n(x), x_n) < \varepsilon
\]
for all $n \in \omega$. The map $f$ has the \emph{shadowing property} if for every $\varepsilon > 0$, there exists a $\delta > 0$ such that every $\delta$-pseudo-orbit is $\varepsilon$-shadowed by some point in $X$.
\end{definition}

To extend this dynamical property to general non-metric topological spaces, Good and Meddaugh \cite{GM} generalized the metric tolerances to the open sets within finite coverings. This formulation allows shadowing to be defined purely in topological terms:

\begin{definition}\label{shadcom}
Let $X$ be a compact space, $f: X \to X$ a continuous map, and $\mathcal{C}$ a finite open cover of $X$. Fix a sequence $(x_n)_{n \in \omega}$ in $X$.
\begin{itemize}
\item The sequence $(x_n)$ is called a \emph{$\mathcal{C}$-pseudo-orbit} if for each $n \in \omega$, there exists some $U_n \in \mathcal{C}$ such that 
\[
\{x_{n+1}, f(x_n)\} \subseteq U_n.
\]
\item A point $x \in X$ is said to \emph{$\mathcal{C}$-shadow} the sequence $\langle x_n \rangle$ if for each $n \in \omega$, there exists some $V_n \in \mathcal{C}$ such that 
\[
\{x_n, f^n(x)\} \subseteq V_n.
\]
\end{itemize}
\end{definition}

\begin{definition}[Topological Shadowing Property]
For a compact space $X$, a continuous map $f: X \to X$ is said to have the \emph{shadowing property} if for every finite open cover $\mathcal{C}$ of $X$, there exists a finite open cover $\mathcal{C}'$ of $X$ such that every $\mathcal{C}'$-pseudo-orbit is $\mathcal{C}$-shadowed by some point in $X$. In this case, we say that $\mathcal{C}'$ \emph{witnesses the shadowing of $\mathcal{C}$}.
\end{definition}

We also recall the basic symbolic terminology used throughout the paper. Let
$A$ be a finite discrete space. The space $A^\omega$, equipped with the product
topology, is compact and zero-dimensional. The left shift
$\sigma:A^\omega\to A^\omega$ is defined by
\[
    \sigma((a_i)_{i\in\omega})=(a_{i+1})_{i\in\omega}.
\]
A \emph{shift space} over $A$ is a closed $\sigma$-invariant subspace of
$A^\omega$. A shift space $Y\subseteq A^\omega$ is called a \emph{shift of
finite type}, or SFT, if there is a finite set $\mathcal F$ of finite words over
$A$ such that $Y$ consists exactly of those sequences in $A^\omega$ in which no
word from $\mathcal F$ occurs.

Equivalently, after passing to a higher block presentation if necessary, an SFT
may be described by finitely many allowed adjacent pairs. Thus, if
$R\subseteq A\times A$, then
\[
    Y_R=\{(a_i)_{i\in\omega}\in A^\omega:(a_i,a_{i+1})\in R
    \text{ for every }i\in\omega\}
\]
is a one-step SFT. This is the form of SFTs that will appear most often below.

\subsection{Inverse Systems and the Mittag-Leffler Condition}
The notion of a sequence-based inverse limit can be generalized to an arbitrary inverse system indexed by a directed pre-ordered set. This provides a powerful framework for constructing complex topological spaces and dynamical systems from simpler discrete components. We refer to \cite{MSeg} for categorical theory of inverse systems.

\begin{definition}
Let $(I,\le)$ be a directed pre-ordered set. An \emph{inverse system (or projective system) of dynamical systems} indexed by $I$ consists of a family
\[
\{(X_i,f_i),\varphi_i^j : i\le j \text{ in } I\},
\]
where each $(X_i,f_i)$ is a dynamical system and each $\varphi_i^j:(X_j,f_j)\to (X_i,f_i)$ is a morphism of dynamical systems (i.e., a continuous map satisfying $\varphi_i^j \circ f_j = f_i \circ \varphi_i^j$), such that:
\begin{itemize}
\item $\varphi_i^i=\mathrm{id}_{X_i}$ for every $i\in I$;
\item whenever $i\le j\le k$, we have $\varphi_i^k = \varphi_i^j\circ \varphi_j^k$.
\end{itemize}
The maps $\varphi_i^j$ are referred to as the \emph{bonding maps}. Usually, we write such an inverse system as $\{(X_i,f_i),\varphi_i^j,I\}$, or simply as $\{(X_i,f_i),\varphi_i^j\}$ when the index set $I$ is clear from the context.
\end{definition}

When the directed index set $I$ is the set of natural numbers $\omega$ (endowed with its standard order), the inverse system is specifically referred to as an \emph{inverse sequence}. In this strictly linear setting, the structural analysis often simplifies significantly.

\begin{definition}
Let $\{(X_i,f_i),\varphi_i^j, I\}$ be an inverse system of dynamical systems.
Its \emph{inverse limit}, denoted by
$\varprojlim \{(X_i,f_i),\varphi_i^j\}$, is the dynamical system $(X,f)$
defined as follows. As a topological space,
\[
X=
\Bigl\{
(x_i)\in \prod_{i\in I}X_i :
\varphi_i^j(x_j)=x_i
\text{ whenever } i\le j
\Bigr\},
\]
endowed with the subspace topology inherited from the product space
$\prod_{i\in I}X_i$. The dynamics $f:X\to X$ is defined coordinatewise by
\[
    f((x_i))=(f_i(x_i)).
\]
For each $i\in I$, the map
\[
    \varphi_i:X\to X_i,\qquad \varphi_i((x_j)_{j\in I})=x_i,
\]
is called the \emph{$i$-th projection map}.
\end{definition}

The inverse limit is characterized by the following universal property, which
will be used repeatedly below.

\begin{proposition}[Universal Property]\label{prop:universal}
Let $(X,f)=\varprojlim \{(X_i,f_i),\varphi_i^j\}$ with $\varphi_i: X\to X_i$ the projection. Suppose that $(Y,g)$ is a dynamical system and that for each $i\in I$ there is a morphism
\[
\psi_i:(Y,g)\to (X_i,f_i)
\]
such that $\psi_i = \varphi_i^j\circ \psi_j$ whenever $i\le j$. Then there exists a unique morphism $\psi:(Y,g)\to (X,f)$ such that $\varphi_i\circ \psi=\psi_i$ for every $i\in I$.
\end{proposition}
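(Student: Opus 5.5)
The plan is to build $\psi$ coordinatewise and use the definition of the inverse limit as a subspace of the product. Define
\[
\psi:Y\to \prod_{i\in I}X_i,\qquad \psi(y)=(\psi_i(y))_{i\in I}.
\]
By the universal property of the product topology, $\psi$ is continuous, since each coordinate $\psi_i$ is continuous. The compatibility hypothesis $\psi_i=\varphi_i^j\circ\psi_j$ for $i\le j$ says precisely that
\[
\varphi_i^j(\psi_j(y))=\psi_i(y).
\]
Hence $\psi(y)$ lies in the subspace $X$. Because $X$ carries the subspace topology, $\psi$ is still continuous when regarded as a map $Y\to X$.

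Next I will check that $\psi$ is a morphism. For $y\in Y$ and each $i$, the $i$-th coordinate of $f(\psi(y))$ is $f_i(\psi_i(y))$, and the $i$-th coordinate of $\psi(g(y))$ is $\psi_i(g(y))$. These agree because each $\psi_i$ is a morphism, so $\psi_i\circ g=f_i\circ\psi_i$. Two points of the product are equal when all their coordinates agree, so $f\circ\psi=\psi\circ g$. The identity $\varphi_i\circ\psi=\psi_i$ holds by construction, since $\varphi_i$ is just the $i$-th coordinate projection.

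For uniqueness, suppose $\psi'$ is another morphism with $\varphi_i\circ\psi'=\psi_i$ for all $i$. Then for every $y$, the points $\psi'(y)$ and $\psi(y)$ have the same $i$-th coordinate for every $i$. They are therefore equal as elements of the product, so $\psi'=\psi$.

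No step is a real obstacle. The only point needing care is that continuity into $X$ follows from continuity into the product together with the subspace topology. It is also worth noting that the argument uses neither compactness nor nonemptiness of $X$, and $\psi$ need not be surjective.
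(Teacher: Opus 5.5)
Your proposal is correct and complete. Defining $\psi$ coordinatewise, using the compatibility condition to see that it lands in the inverse limit, and reading off continuity, equivariance and uniqueness from the product and subspace structure is exactly the standard proof. The paper states this universal property without proof, as a standard fact about inverse systems, so your argument supplies the routine verification it omits.
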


A critical property governing the topological stability of an inverse system is the Mittag-Leffler condition. This condition guarantees that the family of images under the bonding maps eventually stabilizes, preventing the inverse limit from collapsing pathologically.

\begin{definition}[Mittag-Leffler Condition]
Let $\{(X_i,f_i),\varphi_i^j, I\}$ be an inverse system indexed by a directed set $I$. The system satisfies the \emph{Mittag-Leffler (ML) condition} if for each $i \in I$, the directed family of image subspaces $\{\varphi_i^j(X_j)\}_{j \ge i}$ eventually stabilizes. That is, for each $i \in I$, there exists an index $k \ge i$ such that for all $j \ge k$,
\[
\varphi_i^j(X_j) = \varphi_i^k(X_k).
\]
\end{definition}

\begin{remark}\label{rmk:trivial_ML}
A fundamental observation is that if all bonding maps $\varphi_i^j$ in an inverse system are \textbf{surjective}, the system trivially satisfies the Mittag-Leffler condition, since $\varphi_i^j(X_j) = X_i$ for all $j \ge i$. In particular, for an inverse sequence, if the adjacent bonding maps $\varphi_n^{n+1}$ are surjective, the ML condition is automatically met. As we will demonstrate in the following subsection, establishing strictly surjective bonding maps is the key to preserving the shadowing property through zero-dimensional extensions.
\end{remark}

\section{Revisiting Theorems \ref{thm:GM18} and \ref{thm:GM20}}\label{revisit}

In this section we revisit the symbolic construction of Good and Meddaugh
from a slightly different point of view. By working directly with finite clopen
partitions in the zero-dimensional case, we obtain a streamlined inverse-limit
representation by shifts of finite type. Together with Theorem \ref{thm:GM8},
this gives a much shorter proof of the main conclusions of Sections 4 and 5 of
\cite{GM}. The reformulation also slightly strengthens the picture: it separates
the mere existence of symbolic inverse-limit representations from the genuinely
shadowing-theoretic content, which is encoded by the Mittag-Leffler condition.

We begin with recalling some notions from \cite{GM}. Since we shall only use them in the zero-dimensional setting, we formulate them in the terms of clopen partitions rather than arbitrary finite open covers.

\begin{definition}
Let $X$ be a compact zero-dimensional Hausdorff space, and let $f:X\to X$ be a
continuous map. For a finite clopen partition $\mathcal U$ of $X$, we define:
\begin{itemize}
\item the \emph{$\mathcal U$-orbit space}, denoted by
$\mathcal O(\mathcal U)$, to be the subspace of $\mathcal U^\omega$ consisting
of all sequences $(U_i)_{i\in\omega}$ such that
$\bigcap_{i\in\omega} f^{-i}(U_i)\neq\varnothing$;

\item the \emph{$\mathcal U$-pseudo-orbit space}, denoted by
$\mathcal{PO}(\mathcal U)$, to be the subspace of $\mathcal U^\omega$ consisting
of all sequences $(U_i)_{i\in\omega}$ such that
$f(U_i)\cap U_{i+1}\neq\varnothing$ for every $i\in\omega$.
\end{itemize}
\end{definition}

It is clear that $\mathcal O(\mathcal U)\subseteq\mathcal{PO}(\mathcal U)$ and
that both spaces are subshifts of $\mathcal U^\omega$. Moreover,
$\mathcal{PO}(\mathcal U)$ is a shift of finite type, since membership is
determined by the allowed adjacent pairs. The itinerary map
$p_{\mathcal U}:X\to\mathcal O(\mathcal U)$, defined by
$p_{\mathcal U}(x)=(U_i)_{i\in\omega}$ where $f^i(x)\in U_i$, is a factor map
from $(X,f)$ onto $(\mathcal O(\mathcal U),\sigma_{\mathcal U})$.

Suppose that $\mathcal V$ is a finite clopen partition refining $\mathcal U$.
For each $V\in\mathcal V$, let $U(V)$ be the unique member of $\mathcal U$
containing $V$. This defines a one-block map
\[
    \iota_{\mathcal U}^{\mathcal V}:\mathcal V^\omega\to\mathcal U^\omega,
    \qquad
    (V_i)_{i\in\omega}\mapsto (U(V_i))_{i\in\omega}.
\]
This map commutes with the shift, and one immediately verifies that
$\iota_{\mathcal U}^{\mathcal V}(\mathcal O(\mathcal V))
=\mathcal O(\mathcal U)$ and
$\iota_{\mathcal U}^{\mathcal V}(\mathcal{PO}(\mathcal V))
\subseteq \mathcal{PO}(\mathcal U)$. Thus we have the following lemma.

\begin{lemma}\label{opo}\cite[Lemma 13]{GM}
Let $\mathcal V$ be a finite clopen partition refining $\mathcal U$. Then
\[
    \mathcal O(\mathcal U)
    =
    \iota_{\mathcal U}^{\mathcal V}(\mathcal O(\mathcal V))
    \subseteq
    \iota_{\mathcal U}^{\mathcal V}(\mathcal{PO}(\mathcal V))
    \subseteq
    \mathcal{PO}(\mathcal U).
\]
\end{lemma}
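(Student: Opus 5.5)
We prove the three relations one at a time, since each follows directly from the definitions together with the fact that $\iota_{\mathcal U}^{\mathcal V}$ is a one-block map induced by $V\mapsto U(V)$, where $V\subseteq U(V)$. The middle inclusion $\iota_{\mathcal U}^{\mathcal V}(\mathcal O(\mathcal V))\subseteq\iota_{\mathcal U}^{\mathcal V}(\mathcal{PO}(\mathcal V))$ is immediate from $\mathcal O(\mathcal V)\subseteq\mathcal{PO}(\mathcal V)$. To verify that containment, take $(V_i)\in\mathcal O(\mathcal V)$ and a witness $x\in\bigcap_i f^{-i}(V_i)$. Then for each $i$ we have $f(f^i(x))=f^{i+1}(x)\in f(V_i)\cap V_{i+1}$.

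For the rightmost inclusion, let $(V_i)\in\mathcal{PO}(\mathcal V)$. Since $V_i\subseteq U(V_i)$, we get
\[
\varnothing\neq f(V_i)\cap V_{i+1}\subseteq f(U(V_i))\cap U(V_{i+1}).
\]
Hence $(U(V_i))_i\in\mathcal{PO}(\mathcal U)$.

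For the equality $\mathcal O(\mathcal U)=\iota_{\mathcal U}^{\mathcal V}(\mathcal O(\mathcal V))$, we use the itinerary maps.

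\textbf{Inclusion $\supseteq$.} Take $(V_i)\in\mathcal O(\mathcal V)$ with a witness $x$. The same point satisfies $f^i(x)\in V_i\subseteq U(V_i)$ for all $i$, so $(U(V_i))\in\mathcal O(\mathcal U)$.

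\textbf{Inclusion $\subseteq$.} Take $(U_i)\in\mathcal O(\mathcal U)$ and choose $x\in\bigcap_i f^{-i}(U_i)$. Let $(V_i)=p_{\mathcal V}(x)$, meaning $V_i$ is the unique member of $\mathcal V$ containing $f^i(x)$. This is where we need $\mathcal V$ to be a partition: it makes $V_i$ uniquely defined. Then $(V_i)\in\mathcal O(\mathcal V)$, with $x$ as witness. Moreover $U(V_i)$ and $U_i$ both contain $f^i(x)$. Since $\mathcal U$ is also a partition, this forces $U(V_i)=U_i$. Equivalently, $\iota_{\mathcal U}^{\mathcal V}\circ p_{\mathcal V}=p_{\mathcal U}$, and the equality follows because $p_{\mathcal U}$ is onto $\mathcal O(\mathcal U)$.

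There is no genuine obstacle here. The only point requiring care is the use of disjointness of the partitions in the last step. That disjointness is what makes the itineraries well defined and compatible, and it is why the argument is cleaner than in the general open-cover setting of \cite{GM}.
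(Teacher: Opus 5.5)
Your proposal is correct and takes the same approach as the paper. The paper gives no written proof: it says the relations are "immediately verified" from the definitions, using $V\subseteq U(V)$, and cites \cite[Lemma 13]{GM}. Your argument is exactly that verification, and the identity $\iota_{\mathcal U}^{\mathcal V}\circ p_{\mathcal V}=p_{\mathcal U}$, which rests on the partition property, is the right way to obtain the equality.
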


\begin{lemma}\label{sur}
Let $\mathcal V$ be a finite clopen partition refining $\mathcal U$. If
$\mathcal V$ witnesses $\mathcal U$-shadowing, then
\[
    \iota_{\mathcal U}^{\mathcal V}(\mathcal{PO}(\mathcal V))
    =
    \mathcal O(\mathcal U).
\]
\end{lemma}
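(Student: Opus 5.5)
By Lemma \ref{opo} we already have $\mathcal O(\mathcal U)=\iota_{\mathcal U}^{\mathcal V}(\mathcal O(\mathcal V))\subseteq\iota_{\mathcal U}^{\mathcal V}(\mathcal{PO}(\mathcal V))$. It therefore suffices to prove the reverse inclusion $\iota_{\mathcal U}^{\mathcal V}(\mathcal{PO}(\mathcal V))\subseteq\mathcal O(\mathcal U)$. The plan is to convert a symbolic pseudo-orbit into a genuine $\mathcal V$-pseudo-orbit of points, apply the shadowing hypothesis, and use that $\mathcal U$ is a partition to read off the coordinates exactly.

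Fix $(V_i)_{i\in\omega}\in\mathcal{PO}(\mathcal V)$. By definition, $f(V_i)\cap V_{i+1}\neq\varnothing$ for each $i$, so we may choose $y_i\in V_i$ with $f(y_i)\in V_{i+1}$. Now set $x_0=y_0$ and $x_{i+1}=y_{i+1}$ for $i\geq 0$, so that $x_i=y_i\in V_i$ for all $i$. Then $x_{i+1}\in V_{i+1}$ and $f(x_i)=f(y_i)\in V_{i+1}$, so $\{x_{i+1},f(x_i)\}\subseteq V_{i+1}\in\mathcal V$. Hence $(x_i)$ is a $\mathcal V$-pseudo-orbit in the sense of Definition \ref{shadcom}, using the finite open cover $\mathcal V$ (clopen sets are open).

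Because $\mathcal V$ witnesses $\mathcal U$-shadowing, there is a point $x\in X$ that $\mathcal U$-shadows $(x_i)$: for each $i$ some $W_i\in\mathcal U$ contains both $x_i$ and $f^i(x)$. Since $\mathcal U$ is a partition and $x_i\in V_i\subseteq U(V_i)$, necessarily $W_i=U(V_i)$. Thus $f^i(x)\in U(V_i)$ for every $i$, i.e.\ $x\in\bigcap_i f^{-i}(U(V_i))$. Therefore $\iota_{\mathcal U}^{\mathcal V}((V_i))=(U(V_i))_i=p_{\mathcal U}(x)\in\mathcal O(\mathcal U)$, which is the reverse inclusion.

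The only point needing care is the choice of points forming the pseudo-orbit; the selection $y_i\in V_i\cap f^{-1}(V_{i+1})$ handles it. The rest is immediate, because uniqueness of the partition element containing $x_i$ turns cover-shadowing into exact itinerary agreement.
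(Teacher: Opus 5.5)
Your proposal is correct and follows the paper's proof essentially step for step. You reduce to the reverse inclusion via Lemma~\ref{opo}, choose $x_i\in V_i\cap f^{-1}(V_{i+1})$ to get a $\mathcal V$-pseudo-orbit, shadow it, and use that $\mathcal U$ is a partition to force $f^i(x)\in U(V_i)$. (The detour through $y_i$ and then $x_i=y_i$ is redundant but harmless.)
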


\begin{proof}
This is precisely the first part of the proof of \cite[Lemma 15]{GM}. We
include the argument for the convenience of the reader.

By Lemma \ref{opo}, it suffices to prove that
\[
    \iota_{\mathcal U}^{\mathcal V}(\mathcal{PO}(\mathcal V))
    \subseteq \mathcal O(\mathcal U).
\]
Let $(V_i)_{i\in\omega}\in\mathcal{PO}(\mathcal V)$. For each $i\in\omega$,
choose
\[
    x_i\in V_i\cap f^{-1}(V_{i+1}),
\]
which is possible since $f(V_i)\cap V_{i+1}\neq\varnothing$. Then
$(x_i)_{i\in\omega}$ is a $\mathcal V$-pseudo-orbit: indeed,
$f(x_i)$ and $x_{i+1}$ both belong to $V_{i+1}$.

Since $\mathcal V$ witnesses $\mathcal U$-shadowing, there exists $x\in X$
which $\mathcal U$-shadows this pseudo-orbit. Thus, for each $i\in\omega$,
the points $f^i(x)$ and $x_i$ lie in a common member of $\mathcal U$. Since
$x_i\in V_i\subseteq U(V_i)$ and $\mathcal U$ is a partition, this member must
be $U(V_i)$. Hence $f^i(x)\in U(V_i)$ for every $i\in\omega$. Therefore
\[
    x\in \bigcap_{i\in\omega} f^{-i}(U(V_i)),
\]
and consequently
\[
    \iota_{\mathcal U}^{\mathcal V}((V_i)_{i\in\omega})
    =
    (U(V_i))_{i\in\omega}
    \in \mathcal O(\mathcal U).
\]
This proves the desired inclusion.
\end{proof}


Before stating the next theorem, let us clarify its relation to
\cite[Theorem 16]{GM}, which leads to the proof of Theorem \ref{thm:GM18}. In the proof of that theorem, Good and Meddaugh work
under the shadowing assumption and compare two inverse systems: one formed by
the orbit spaces $\mathcal O(\mathcal U)$ and the other by the pseudo-orbit
spaces $\mathcal{PO}(\mathcal U)$. Since the inverse limit of the orbit spaces
is naturally conjugate to the original system, they prove conjugacy by showing
that, under shadowing, the two inverse limits coincide.

The point of the following theorem is slightly different. We show that, the inverse limit of the pseudo-orbit spaces
$\mathcal{PO}(\mathcal U)$ alone is already conjugate to the original system,
without assuming shadowing. Thus shadowing is not needed for the symbolic
inverse-limit representation itself. Its role is instead to ensure the
Mittag-Leffler stability of the inverse system. For this reason, our proof does
not need to introduce the inverse system of orbit spaces.

\begin{theorem}\label{zerolift}
Let $X$ be a compact zero-dimensional Hausdorff space, and let $f:X\to X$ be a
continuous map. Then $(X,f)$ is conjugate to the inverse limit of an inverse
system of shifts of finite type. Moreover, if $f$ has shadowing, then the
inverse system can be chosen to satisfy the Mittag-Leffler condition.
\end{theorem}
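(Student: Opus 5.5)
The plan is to index the inverse system by the set $\mathcal P$ of all finite clopen partitions of $X$, directed by refinement ($\mathcal U\le\mathcal V$ iff $\mathcal V$ refines $\mathcal U$). This is a directed preorder, since any two finite clopen partitions have a common refinement by pairwise intersections. The bonding maps will be the restrictions $\iota_{\mathcal U}^{\mathcal V}:\mathcal{PO}(\mathcal V)\to\mathcal{PO}(\mathcal U)$, which are well defined and shift-commuting by Lemma \ref{opo}. They compose correctly because $U(V)$ is determined by containment. Each $\mathcal{PO}(\mathcal U)$ is a one-step SFT. Write $(Z,\sigma)$ for the inverse limit. By Proposition \ref{prop:universal}, the itinerary maps $p_{\mathcal U}:X\to\mathcal O(\mathcal U)\subseteq\mathcal{PO}(\mathcal U)$ are compatible ($\iota_{\mathcal U}^{\mathcal V}\circ p_{\mathcal V}=p_{\mathcal U}$), so they induce a morphism $\Phi:(X,f)\to(Z,\sigma)$. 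Since $X$ is compact and $Z$ is Hausdorff, it suffices to show that $\Phi$ is a bijection.

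\emph{Injectivity} follows from zero-dimensionality. If $x\neq y$, choose a clopen set $C$ with $x\in C$ and $y\notin C$, and let $\mathcal U=\{C,X\setminus C\}$. Then the $0$-th coordinates of $p_{\mathcal U}(x)$ and $p_{\mathcal U}(y)$ differ.

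\emph{Surjectivity} is the main point, since we cannot assume shadowing. Let $z=(z^{\mathcal U})_{\mathcal U}\in Z$, with $z^{\mathcal U}=(U_i^{\mathcal U})_{i\in\omega}$. For each $i$, the sets $U_i^{\mathcal U}$ form a decreasing family of nonempty clopen sets along the directed set, because compatibility gives $U_i^{\mathcal V}\subseteq U_i^{\mathcal U}$. So $F_i=\bigcap_{\mathcal U}U_i^{\mathcal U}$ is nonempty by compactness. It is a single point $x_i$, since any two distinct points are separated by some two-set partition. I then need $f(x_i)=x_{i+1}$. If not, choose a clopen $C\ni x_{i+1}$ with $f(x_i)\notin C$. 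By continuity, choose a clopen $D\ni x_i$ with $f(D)\cap C=\varnothing$. Let $\mathcal W$ be the common refinement of $\{C,X\setminus C\}$ and $\{D,X\setminus D\}$. Since $x_j\in U_j^{\mathcal W}$ and $\mathcal W$ is a partition, $U_i^{\mathcal W}\subseteq D$ and $U_{i+1}^{\mathcal W}\subseteq C$. This contradicts $f(U_i^{\mathcal W})\cap U_{i+1}^{\mathcal W}\neq\varnothing$. Hence $x_i=f^i(x_0)$, so $f^i(x_0)\in U_i^{\mathcal U}$ for all $i$ and all $\mathcal U$, that is, $\Phi(x_0)=z$. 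I expect this step to be the only nontrivial one. The key is that a pseudo-orbit condition imposed at every scale simultaneously forces a genuine orbit, which is exactly where the clopen structure and continuity of $f$ are used.

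\emph{Mittag-Leffler condition under shadowing.} Given $\mathcal U$, I first use that finite clopen partitions are cofinal among finite open covers. By compactness and zero-dimensionality, any finite open cover $\mathcal C'$ is refined by a finite clopen partition $\mathcal V$, obtained from a finite clopen cover subordinate to $\mathcal C'$ by disjointification. Every $\mathcal V$-pseudo-orbit is then a $\mathcal C'$-pseudo-orbit. Taking $\mathcal C'$ witnessing shadowing of $\mathcal U$ and refining $\mathcal V$ further to refine $\mathcal U$, we get that $\mathcal V$ witnesses $\mathcal U$-shadowing. For any $\mathcal W\ge\mathcal V$, Lemma \ref{opo} and Lemma \ref{sur} give
\[
\mathcal O(\mathcal U)=\iota_{\mathcal U}^{\mathcal W}(\mathcal O(\mathcal W))\subseteq\iota_{\mathcal U}^{\mathcal W}(\mathcal{PO}(\mathcal W))\subseteq\iota_{\mathcal U}^{\mathcal V}(\mathcal{PO}(\mathcal V))=\mathcal O(\mathcal U).
\]
So the images stabilize at $\mathcal O(\mathcal U)$ from $\mathcal V$ on, which is the Mittag-Leffler condition.
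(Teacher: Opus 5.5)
Your proposal is correct and follows the paper's proof essentially step for step. It uses the same inverse system of pseudo-orbit spaces, the same map induced by itineraries, the same nested-clopen and continuity argument for surjectivity, and Lemma \ref{sur} for the Mittag-Leffler condition. The only differences are cosmetic: you explicitly justify that a finite clopen partition can witness $\mathcal U$-shadowing, and you get stabilization from the monotonicity of the images $\iota_{\mathcal U}^{\mathcal W}(\mathcal{PO}(\mathcal W))$, whereas the paper observes that finer partitions still witness $\mathcal U$-shadowing.
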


\begin{proof}
Let $\mathcal D$ be a cofinal subset of the set of all finite clopen
partitions of $X$, ordered by refinement. For $\mathcal U,\mathcal V\in\mathcal D$
with $\mathcal V$ refining $\mathcal U$, let
$\varphi_{\mathcal U}^{\mathcal V}$ be the restriction of
$\iota_{\mathcal U}^{\mathcal V}$ to $\mathcal{PO}(\mathcal V)$. By Lemma
\ref{opo}, this gives a bonding map
$\varphi_{\mathcal U}^{\mathcal V}:\mathcal{PO}(\mathcal V)\to
\mathcal{PO}(\mathcal U)$. These maps are compatible under refinement, and
therefore
\[
    \{(\mathcal{PO}(\mathcal U),\sigma_{\mathcal U}),
    \varphi_{\mathcal U}^{\mathcal V}\}_{\mathcal U\in\mathcal D}
\]
is an inverse system of shifts of finite type. Let $(Y,g)$ be its inverse limit,
and let $\varphi_{\mathcal U}:Y\to\mathcal{PO}(\mathcal U)$ denote the 
projection.

For each $x\in X$ and each $\mathcal U\in\mathcal D$, let
$p_{\mathcal U}(x)=(U_i)_{i\in\omega}$ be the unique $\mathcal U$-itinerary of
$x$, namely $f^i(x)\in U_i$ for all $i\in\omega$. If $\mathcal V$ refines
$\mathcal U$, then
$\varphi_{\mathcal U}^{\mathcal V}(p_{\mathcal V}(x))=p_{\mathcal U}(x)$.
Since the maps $p_{\mathcal U}$ are evidently continuous, and since each $p_{\mathcal U}$ intertwines $f$ with
$\sigma_{\mathcal U}$, by Proposition \ref{prop:universal}, they induce a continuous map
$\theta:X\to Y$ intertwining $f$ with $g$ such that $p_{\mathcal U} = \varphi_{\mathcal U}\circ \theta$ for any $\mathcal U\in \mathcal D$.

We prove that $\theta$ is a homeomorphism. First, $\theta$ is injective. Indeed,
if $x\neq x'$, then, since $X$ is zero-dimensional Hausdorff and $\mathcal D$ is
cofinal, there exists $\mathcal U\in\mathcal D$ such that $x$ and $x'$ lie in
different members of $\mathcal U$. Hence $p_{\mathcal U}(x)\neq
p_{\mathcal U}(x')$, and therefore $\theta(x)\neq\theta(x')$, because $p_{\mathcal U} = \varphi_{\mathcal U}\circ \theta$.

To prove surjectivity, fix $y\in Y$. Write
$\varphi_{\mathcal U}(y)=(U_i^{\mathcal U})_{i\in\omega}$ for each
$\mathcal U\in\mathcal D$. If $\mathcal V$ refines $\mathcal U$, then
$U_i^{\mathcal V}\subseteq U_i^{\mathcal U}$ for every $i$. Thus, for each
fixed $i$, the family $\{U_i^{\mathcal U}:\mathcal U\in\mathcal D\}$ is
filtered. Since all these sets are nonempty compact clopen sets, their
intersection is nonempty; since finite clopen partitions separate points and
$\mathcal D$ is cofinal, this intersection consists of a single point. Denote it
by $x_i$.

We claim that $f(x_i)=x_{i+1}$ for every $i\in\omega$. Suppose not. Since $X$
is zero-dimensional Hausdorff, choose disjoint clopen neighbourhoods $O$ and
$B$ of $f(x_i)$ and $x_{i+1}$, respectively. By continuity of $f$, there is a
clopen neighbourhood $A$ of $x_i$ such that $f(A)\subseteq O$. Choose
$\mathcal U\in\mathcal D$ refining both $\{A,X\setminus A\}$ and
$\{B,X\setminus B\}$. Then $U_i^{\mathcal U}\subseteq A$ and
$U_{i+1}^{\mathcal U}\subseteq B$, so
$f(U_i^{\mathcal U})\cap U_{i+1}^{\mathcal U}=\varnothing$. This contradicts
$\varphi_{\mathcal U}(y)\in\mathcal{PO}(\mathcal U)$. Hence
$f(x_i)=x_{i+1}$.

It follows that $x_i=f^i(x_0)$ for all $i\in\omega$. Since
$x_i\in U_i^{\mathcal U}$ for every $\mathcal U$ and every $i$, the
$\mathcal U$-itinerary of $x_0$ is exactly $\varphi_{\mathcal U}(y)$. Therefore
$\theta(x_0)=y$. Thus $\theta$ is onto. Since $X$ is compact and $Y$ is
Hausdorff, $\theta$ is a homeomorphism. Hence $(X,f)$ is conjugate to
$(Y,g)$.

Now assume that $f$ has shadowing. For each $\mathcal U\in\mathcal D$, choose
$\mathcal V\in\mathcal D$ refining $\mathcal U$ and witnessing
$\mathcal U$-shadowing.
If $\mathcal W\in\mathcal D$ refines $\mathcal V$, then $\mathcal W$ witnesses $\mathcal U$-shadowing as well.
Then Lemma \ref{sur} gives that
$\varphi_{\mathcal U}^{\mathcal W}(\mathcal{PO}(\mathcal W))
=
\mathcal{O}(\U)$ for any $\mathcal W$ refining $\mathcal V$. Therefore the inverse system
satisfies the Mittag-Leffler condition.
\end{proof}

It is well known that every compact dynamical system is a factor of a
zero-dimensional one. We first recall the underlying topological fact. Let
$X$ be a compact Hausdorff space and put $\kappa=w(X)$, where $w(X)$ denotes the weight of $X$. Then $X$ embeds as a
closed subspace of $\mathbb I^\kappa$, where $\mathbb I=[0,1]$. Since
$\mathbb I$ is a continuous image of the Cantor set $2^\omega$, the cube
$\mathbb I^\kappa$ is a continuous image of $(2^\omega)^\kappa$. If
$q:(2^\omega)^\kappa\to\mathbb I^\kappa$ is such a surjection, then
$
Z=q^{-1}(X)
$
is a closed, and hence compact, zero-dimensional subspace of
$(2^\omega)^\kappa$, and the restriction $q|_Z:Z\to X$ is onto. Moreover,
$w(Z)\leq\kappa$, while the existence of a continuous surjection from $Z$ onto
$X$ implies $w(X)\leq w(Z)$. Thus $w(Z)=w(X)$.

Let $\pi = q|_Z$. Given a continuous map
$f:X\to X$, consider the orbit map
\[
    e:X\to X^\omega,\qquad e(x)=(x,f(x),f^2(x),\ldots).
\]
Its image
\[
    \Delta=\{(x,f(x),f^2(x),\ldots):x\in X\}\subseteq X^\omega
\]
is a compact, and hence closed, shift-invariant subspace of $X^\omega$. Moreover,
$(X,f)$ is conjugate to $(\Delta,\sigma|_\Delta)$ via the map $e$.

Now the map $\pi^\omega:Z^\omega\to X^\omega$ commutes with the shift. Hence
\[
    Y=(\pi^\omega)^{-1}(\Delta)
\]
is a compact zero-dimensional shift-invariant subspace of $Z^\omega$, and the
restriction
\[
    \pi^\omega|_Y:Y\to\Delta
\]
is a factor map. Therefore $(X,f)$ is conjugate to a factor of the compact
zero-dimensional system $(Y,\sigma|_Y)$.

\begin{remark}
If the zero-dimensional extension is not required to have the same weight with $X$, it may alternatively be obtained from the Gleason cover \cite{Gleason,Johnstone}, that is, an irreducible continuous surjection
\[
    p:EX\to X
\]
from an extremally disconnected compact Hausdorff space $EX$ onto $X$. Since extremally disconnected compact Hausdorff spaces are
projective in the category of compact Hausdorff spaces, for every continuous map
$f:X\to X$ the map $f\circ p:EX\to X$ lifts through $p$. Hence there exists a
continuous map $\widetilde f:EX\to EX$ such that
\[
    p\circ \widetilde f=f\circ p.
\]
Thus $(X,f)$ is a factor of the zero-dimensional compact dynamical system
$(EX,\widetilde f)$. This gives a more categorical version of the elementary
construction above. Notice, however, that the lift $\widetilde f$ need not be
unique.
\end{remark}

Theorem \ref{thm:GM20} shows that every compact metric system with shadowing is
a factor of the inverse limit of an inverse sequence of shifts of finite type.
The preceding theorem indicates that the factor-representation part of this
statement is not specific to shadowing in the zero-dimensional case. Combining
this observation with the elementary fact that every compact Hausdorff
dynamical system is a factor of a compact zero-dimensional one, we obtain the
following general consequence. Thus neither shadowing nor metrizability is
needed for the mere existence of an SFT inverse-limit factor representation.

\begin{corollary}\label{coro:fac}
Let $X$ be a compact Hausdorff space and let $f:X\to X$ be continuous. Then
$(X,f)$ is conjugate to a factor of the inverse limit of an inverse system of
shifts of finite type.
\end{corollary}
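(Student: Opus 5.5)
The plan is to combine the zero-dimensional extension constructed just above with Theorem \ref{zerolift}. Given $(X,f)$ with $X$ compact Hausdorff, I will first apply the construction preceding the Remark. It provides a compact zero-dimensional Hausdorff space $Y\subseteq Z^\omega$, with $Z=q^{-1}(X)\subseteq (2^\omega)^\kappa$ and $\kappa=w(X)$, carrying the shift map $\sigma|_Y$. It also provides a factor map $\pi^\omega|_Y:(Y,\sigma|_Y)\to(\Delta,\sigma|_\Delta)$. The orbit map $e$ conjugates $(X,f)$ with $(\Delta,\sigma|_\Delta)$, so $(X,f)$ is conjugate to a factor of $(Y,\sigma|_Y)$; in fact $e^{-1}\circ\pi^\omega|_Y$ is an honest factor map onto $(X,f)$.

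Before invoking Theorem \ref{zerolift} I will check its hypotheses for $(Y,\sigma|_Y)$. The space $Z^\omega$ is a product of zero-dimensional compact Hausdorff spaces, hence compact Hausdorff and zero-dimensional, since products of clopen sets form a base. The set $Y$ is the preimage of the closed set $\Delta$ under the continuous map $\pi^\omega$, so it is closed and therefore compact. Being a subspace of a zero-dimensional space, it is zero-dimensional. It is shift-invariant: if $\pi^\omega(z)\in\Delta$, then $\pi^\omega(\sigma z)=\sigma\pi^\omega(z)\in\Delta$ because $\Delta$ is $\sigma$-invariant. Finally, the restricted shift is continuous.

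Theorem \ref{zerolift} then gives a conjugacy $\theta:(Y,\sigma|_Y)\to(W,g)$, where $(W,g)$ is the inverse limit of the inverse system $\{(\mathcal{PO}(\mathcal U),\sigma_{\mathcal U}),\varphi_{\mathcal U}^{\mathcal V}\}$ of shifts of finite type. The composition $e^{-1}\circ\pi^\omega|_Y\circ\theta^{-1}:(W,g)\to(X,f)$ is continuous and surjective. It intertwines $g$ with $f$, since each of the three maps is a morphism. This exhibits $(X,f)$ as a factor of an SFT inverse limit, which proves the corollary.

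There is no substantial obstacle. The only points needing care are the two checks above: that $Y$ really satisfies the hypotheses of Theorem \ref{zerolift} (closedness and shift-invariance), and that the composition of a conjugacy with a factor map is again a factor map. Note that the index set of the inverse system is a cofinal family of clopen partitions of $Y$. It need not be $\omega$ when $X$ is non-metrizable, so the statement rightly speaks of an inverse \emph{system} rather than an inverse sequence.
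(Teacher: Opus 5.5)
Your proposal is correct and follows the paper's own argument. You build the zero-dimensional extension $(Y,\sigma|_Y)$ with $Y=(\pi^\omega)^{-1}(\Delta)$, apply Theorem \ref{zerolift} to it, and compose the resulting conjugacy with the factor map onto $(X,f)$. Your explicit checks (compactness, zero-dimensionality and shift-invariance of $Y$, and that the composite is a factor map) are exactly the routine points the paper leaves implicit.
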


It is worth noting that if the space $X$ in Theorem \ref{zerolift} is metrizable, then the directed set
$\mathcal D$ can be chosen to be a sequence.
Consequently, the inverse system in Theorem \ref{zerolift}, and hence also the
inverse system appearing in Corollary \ref{coro:fac}, may be chosen to be an
inverse sequence. The latter is because of that $Z$ has the same weight with $X$; so if $X$ is metrizable, then also is $Z$.

We conclude this section with a simple observation concerning sofic shifts.
Recall that a shift space is \emph{sofic} if it is a factor of a shift of
finite type. Since every shift of finite type is a zero-dimensional compact
dynamical system with shadowing, every sofic shift is clearly a factor of such
a system. The converse follows from Lemma \ref{sur}, and yields the following
alternative characterization of sofic shifts. A related result for actions of
countable groups on compact ultrametric spaces was obtained in \cite{Dou}.

\begin{corollary}
A shift space is sofic if and only if it is a factor of a zero-dimensional
compact dynamical system with shadowing.
\end{corollary}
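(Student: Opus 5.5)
The forward direction is immediate. If $Y$ is sofic, then by definition it is a factor of a shift of finite type $(X,\sigma)$. The space $X$ is compact and zero-dimensional, being a closed subspace of $A^\omega$ with $A$ finite. By Walters' theorem, $X$ has shadowing. So $Y$ is a factor of a zero-dimensional compact system with shadowing.

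For the converse, let $\pi:(X,f)\to(Y,\sigma)$ be a factor map, where $X$ is compact zero-dimensional with shadowing and $Y\subseteq A^\omega$ is a shift space over a finite alphabet $A$. The plan is to realise $Y$ as a factor of an orbit space $\mathcal O(\mathcal U)$ for a suitable finite clopen partition $\mathcal U$ of $X$. Lemma \ref{sur} then shows that $\mathcal O(\mathcal U)$ is itself sofic, being a factor of the SFT $\mathcal{PO}(\mathcal V)$.

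\textbf{Step 1.} Let $\mathcal U$ be the clopen partition $\{\pi^{-1}([a]):a\in A\}$ with empty members discarded. Here $[a]$ is the cylinder of sequences whose $0$-th symbol is $a$. Define a one-block map $\beta:\mathcal U^\omega\to A^\omega$ by sending $\pi^{-1}([a])$ to $a$. For $x\in X$, the $i$-th symbol of $\pi(x)$ is the $0$-th symbol of $\sigma^i\pi(x)=\pi(f^i x)$. Hence it equals $a$ exactly when $f^i(x)\in\pi^{-1}([a])$. Therefore $\beta\circ p_{\mathcal U}=\pi$. Since $p_{\mathcal U}$ maps $X$ onto $\mathcal O(\mathcal U)$, this gives $\beta(\mathcal O(\mathcal U))=\pi(X)=Y$. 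Thus $\beta$ restricts to a factor map $\mathcal O(\mathcal U)\to Y$ that commutes with the shifts.

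\textbf{Step 2.} Using shadowing in the topological sense, pick a finite open cover witnessing $\mathcal U$-shadowing. By zero-dimensionality and compactness, refine it, together with $\mathcal U$, to a finite clopen partition $\mathcal V$. A refinement of a witness is still a witness, since every $\mathcal V$-pseudo-orbit is then a pseudo-orbit for the coarser cover. Lemma \ref{sur} now gives $\iota^{\mathcal V}_{\mathcal U}(\mathcal{PO}(\mathcal V))=\mathcal O(\mathcal U)$.

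\textbf{Step 3.} Compose the maps. $\beta\circ\iota^{\mathcal V}_{\mathcal U}$ is a factor map from the SFT $\mathcal{PO}(\mathcal V)$ onto $Y$, so $Y$ is sofic.

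The only delicate point is in Step 2: the witness supplied by the definition is an arbitrary finite open cover, not a partition. This is handled by the refinement argument, using that clopen sets form a base in a compact zero-dimensional space.
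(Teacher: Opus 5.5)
Your proof is correct and follows essentially the same route as the paper: you pull back the zeroth-coordinate cylinders to a clopen partition $\mathcal U$ so that $Y$ is the image of the orbit space $\mathcal O(\mathcal U)$, and then Lemma \ref{sur} exhibits $\mathcal O(\mathcal U)$ as a factor of the SFT $\mathcal{PO}(\mathcal V)$. Your explicit refinement of the open-cover witness to a clopen partition $\mathcal V$ fills in a step the paper only asserts, and using just a factor map $\mathcal O(\mathcal U)\to Y$ (rather than the conjugacy the paper notes) is all the argument requires.
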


\begin{proof}
Let $(X,f)$ be a zero-dimensional compact dynamical system with shadowing, and
let $\phi:(X,f)\to (Y,\sigma)$ be a factor map, where $Y\subseteq A^\omega$ is a
shift-invariant compact subspace over a finite alphabet $A$. Replacing $A$ by
the set of symbols which actually occur in $Y$, we may assume that the zeroth
coordinate projection $p_0:Y\to A$ is onto.

Put $p_A=p_0\circ\phi:X\to A$. Since $A$ is finite and discrete, the family
\[
    \mathcal U=\{p_A^{-1}(a):a\in A\}
\]
is a finite clopen partition of $X$. The map
\[
    p_A^{-1}(a)\longmapsto a
\]
induces a conjugacy from the $\mathcal U$-orbit space $\mathcal O(\mathcal U)$
onto $Y$. Indeed, for $x\in X$ and $i\in\omega$, the symbol in the $i$-th
coordinate of $\phi(x)$ is precisely $p_A(f^i(x))$.

Since $f$ has shadowing, there exists a finite clopen partition $\mathcal V$
refining $\mathcal U$ and witnessing $\mathcal U$-shadowing. By Lemma
\ref{sur},
\[
    \iota_{\mathcal U}^{\mathcal V}(\mathcal{PO}(\mathcal V))
    =
    \mathcal O(\mathcal U).
\]
The space $\mathcal{PO}(\mathcal V)$ is a shift of finite type, and
$\mathcal O(\mathcal U)$ is its factor. Hence $Y$ is a factor of a shift of
finite type, that is, $Y$ is sofic.
\end{proof}

\section{Metric Shadowing Systems and Surjective SFT Extensions}\label{s2}

In the previous section we separated the soft symbolic representation from the
shadowing phenomenon itself. We showed that every compact Hausdorff dynamical
system is a factor of an inverse limit of shifts of finite type. Thus the mere
existence of an SFT inverse-limit factor representation does not characterize
shadowing, nor does it require metrizability.

The essential question is whether shadowing imposes additional stability on such
a representation. In the zero-dimensional case, Good and Meddaugh proved that
this stability is precisely the Mittag-Leffler condition. For compact metric
systems which are not necessarily zero-dimensional, their representation theorem
produces an SFT inverse-limit extension, but the bonding maps obtained from
cover refinements need not be surjective, and so the Mittag-Leffler condition is
not automatic.

The aim of this section is to show that, in the metric shadowing case, the SFT
inverse sequence can be chosen with surjective bonding maps. The proof uses
finite nets rather than finite open covers. Shadowing allows us to replace
coarse pseudo-orbits by genuine orbits in $X$, and these genuine orbits can then
be approximated coherently at every finer level. 

Throughout this section, an \emph{$\eta$-net} in a metric space $(X,d)$ means a
 subset $A\subseteq X$ such that for every $x\in X$ there exists
$a\in A$ with $d(x,a)<\eta$. We shall write $\eta_n\searrow 0$
to mean that $(\eta_n)_{n\in\omega}$ is a decreasing sequence of positive real
numbers converging to $0$.

\begin{theorem}\label{Zero-ext}
Let $(X,d)$ be a compact metric space and let $f:X\to X$ be a continuous map with the shadowing property. Then there exist an inverse sequence $\{Y_N, p^{N+1}_N\}$ of shifts of finite type with surjective bonding maps, such that $(X, f)$ is a factor of the inverse limit.
\end{theorem}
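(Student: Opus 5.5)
The plan is to build the SFTs from finite nets, with each level's alphabet recording a whole chain of approximations down through the coarser levels. The bonding maps will then be one-block coordinate projections, and their surjectivity will come directly from shadowing.

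\emph{Parameters.} First fix a summable sequence $\varepsilon_N\searrow 0$ with $\sum_N\varepsilon_N<\infty$. For each $N$, shadowing gives $\delta_N\le\varepsilon_N$ such that every $\delta_N$-pseudo-orbit is $\varepsilon_N$-shadowed. Then choose $\eta_N\le\min\{\varepsilon_N,\delta_N/2\}$ small enough that $d(u,v)<\eta_N$ implies $d(f(u),f(v))<\delta_N/2$; this is possible by uniform continuity. Finally fix a finite $\eta_N$-net $A_N\subseteq X$.

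\emph{The SFTs.} Let the alphabet $\Sigma_N$ be the set of chains $(c_0,\dots,c_N)$ with $c_k\in A_k$ and $d(c_k,c_{k-1})<\varepsilon_{k-1}+\eta_k$ for $1\le k\le N$. Let $Y_N\subseteq\Sigma_N^\omega$ be the one-step SFT whose allowed pairs $\bigl((c_k),(c'_k)\bigr)$ are those with $d(f(c_k),c'_k)<\delta_k$ for every $k\le N$. Thus each level of a point of $Y_N$ is a $\delta_k$-pseudo-orbit in $A_k$. The bonding map $p_N^{N+1}$ forgets the last coordinate. It is clearly a shift-commuting one-block map sending $Y_{N+1}$ into $Y_N$.

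\emph{Surjectivity, the key step.} Given $y=(\mathbf c_i)_i\in Y_N$, its top level $(c_{N,i})_i$ is a $\delta_N$-pseudo-orbit. Hence some $x$ satisfies $d(f^i(x),c_{N,i})<\varepsilon_N$ for all $i$. For each $i$, pick $b_i\in A_{N+1}$ with $d(b_i,f^i(x))<\eta_{N+1}$. Then $d(b_i,c_{N,i})<\varepsilon_N+\eta_{N+1}$, so $(\mathbf c_i,b_i)\in\Sigma_{N+1}$. Moreover
\[
d(f(b_i),b_{i+1})\le d(f(b_i),f^{i+1}(x))+d(f^{i+1}(x),b_{i+1})<\delta_{N+1}/2+\eta_{N+1}\le\delta_{N+1},
\]
so the extended sequence lies in $Y_{N+1}$ and maps to $y$. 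This is exactly where shadowing enters, and it is the step I expect to be the delicate one. The construction is designed so that the coarse symbols are kept verbatim rather than recomputed by a nearest-point map, which is what can destroy surjectivity in the cover-refinement approach of Theorem~\ref{thm:GM20}.

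\emph{The factor map.} Let $(Y,g)$ be the inverse limit. A point of $Y$ is a family $(c_{k,i})_{k,i\in\omega}$ of net points. For each fixed $i$, the bound $d(c_{k,i},c_{k-1,i})<\varepsilon_{k-1}+\eta_k\le 2\varepsilon_{k-1}$ shows the sequence is Cauchy, uniformly in $i$ and in the point. Let $x_i$ be its limit, so that $d(c_{k,i},x_i)\le r_k:=2\sum_{j\ge k}\varepsilon_j\to 0$. Then
\[
d(f(x_i),x_{i+1})\le d(f(x_i),f(c_{k,i}))+\delta_k+r_k\to 0
\]
as $k\to\infty$, so $x_i=f^i(x_0)$. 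Define $\pi(y)=x_0$. This map is continuous, since it is a uniform limit of maps each depending on finitely many coordinates, and $\pi\circ g=f\circ\pi$ because shifting $y$ replaces $x_0$ by $x_1$. For surjectivity, given $x\in X$, choose $c_{k,i}\in A_k$ with $d(c_{k,i},f^i(x))<\eta_k$. Then $d(c_{k,i},c_{k-1,i})<\eta_k+\eta_{k-1}\le\varepsilon_{k-1}+\eta_k$, and $d(f(c_{k,i}),c_{k,i+1})<\delta_k/2+\eta_k\le\delta_k$, so this family is a point of $Y$ mapping to $x$. Hence $(X,f)$ is a factor of $\varprojlim\{Y_N,p_N^{N+1}\}$.
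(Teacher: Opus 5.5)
Your proposal is correct and follows essentially the same route as the paper: finite $\eta_N$-nets, rows of $\delta_N$-pseudo-orbits linked by a vertical closeness relation, bonding maps that delete the top row, surjectivity obtained by shadowing the top row and re-approximating the shadowing orbit in the finer net, and a Cauchy-limit factor map. The differences are only cosmetic. You encode $Y_N$ as a one-step SFT over an alphabet of chains, and you use the vertical tolerance $\varepsilon_{k-1}+\eta_k$ in place of $2\varepsilon_{k-1}$. You also obtain continuity of $\pi$ from uniform convergence, where the paper uses an explicit tail estimate.
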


\begin{proof}
Without loss of generality, we may assume that the metric $d$ is bounded by $1$ (if not, we can replace $d$ with the topologically equivalent metric $\min\{d, 1\}$, which preserves both the compact topology and the shadowing property). Fix the sequence $\varepsilon_n = 1/3^n$ for $n\in\omega$. Note that $\varepsilon_n \searrow 0$ and the sum converges.

Since $f$ has the shadowing property, for each $n$ we may choose $\delta_n>0$ such that every $\delta_n$-pseudo-orbit of $f$ is $\varepsilon_n$-shadowed by a point in $X$. By decreasing $\delta_n$ if necessary, we may assume without loss of generality that $\delta_n \searrow 0$.

For each $n$, choose $\eta_n>0$ satisfying
\begin{equation}\label{eq:eta_bound}
        0 < \eta_n < \min\{\varepsilon_n, \delta_n/3\},
\end{equation}
and such that uniform continuity holds at this scale:
\begin{equation}\label{eq:unif_cont}
        d(x,y) < \eta_n
        \quad\Longrightarrow\quad
        d(f(x),f(y)) < \delta_n/3.
\end{equation}
Let $A_n \subseteq X$ be a finite $\eta_n$-net. Let
\[
        \Sigma_n
        =
        \left\{ (a_i)_{i\geq 0} \in A_n^{\omega} : d(f(a_i),a_{i+1}) < \delta_n \text{ for every } i \geq 0 \right\}.
\]
Then $\Sigma_n$ is a one-sided shift of finite type over the finite alphabet $A_n$. 

For $n\in\omega$, define a vertical compatibility relation $R_{n+1,n} \subseteq A_{n+1} \times A_n$ by
\[
        b \mathrel{R_{n+1,n}} a
        \quad\Longleftrightarrow\quad
        d(b,a) < 2\varepsilon_n.
\]

For each $N\in\omega$, define the subset $Y_N \subseteq \prod_{n=0}^{N} \Sigma_n$ consisting of arrays $y = (a_i^0, a_i^1, \dots, a_i^N)_{i\geq 0} \in Y_N$ if and only if, for each $0 \leq n < N$ and each $i \geq 0$, the vertical relation $a_i^{n+1} \mathrel{R_{n+1,n}} a_i^n$ is satisfied. Consequently, $(Y_N, \sigma_N)$ is a shift of finite type over the finite alphabet $A_0 \times A_1 \times \cdots \times A_N$, where $\sigma_N$ denotes the coordinate-wise shift operator.

Let $p_N^{N+1}: Y_{N+1} \to Y_N$ be the projection that deletes the last coordinate row. We claim that $p_N^{N+1}$ is surjective.

Fix $y = (a_i^0, \dots, a_i^N)_{i\geq 0} \in Y_N$. Since $(a_i^N)_{i\geq 0} \in \Sigma_N$, we have $d(f(a_i^N), a_{i+1}^N) < \delta_N$ for all $i \geq 0$. Thus, $(a_i^N)_{i\geq 0}$ is a $\delta_N$-pseudo-orbit of $f$. By the choice of $\delta_N$, there exists a point $x \in X$ such that
\begin{equation}\label{eq:shadow_x}
        d(f^i(x), a_i^N) < \varepsilon_N
        \qquad\text{for all } i \geq 0.
\end{equation}
Since $A_{N+1}$ is an $\eta_{N+1}$-net, we may choose $a_i^{N+1} \in A_{N+1}$ such that
\begin{equation}\label{eq:eta_net_approx}
        d(a_i^{N+1}, f^i(x)) < \eta_{N+1}
        \qquad\text{for all } i \geq 0.
\end{equation}
Using the triangle inequality along with \eqref{eq:shadow_x} and \eqref{eq:eta_net_approx}, the vertical distance is strictly bounded:
\[
        d(a_i^{N+1}, a_i^N)
        \leq
        d(a_i^{N+1}, f^i(x)) + d(f^i(x), a_i^N)
        <
        \eta_{N+1} + \varepsilon_N
        <
        2\varepsilon_N,
\]
where the last inequality follows from the condition in \eqref{eq:eta_bound}. Hence, $a_i^{N+1} \mathrel{R_{N+1,N}} a_i^N$ for all $i \geq 0$.

Moreover, evaluating the horizontal step using uniform continuity \eqref{eq:unif_cont} together with \eqref{eq:eta_net_approx} for index $i+1$, we obtain:
\[
\begin{aligned}
        d(f(a_i^{N+1}), a_{i+1}^{N+1})
        &\leq
        d(f(a_i^{N+1}), f^{i+1}(x)) + d(f^{i+1}(x), a_{i+1}^{N+1}) \\
        &<
        \delta_{N+1}/3 + \eta_{N+1} \\
        &<
        \delta_{N+1}.
\end{aligned}
\]
Thus, $(a_i^{N+1})_{i\geq 0} \in \Sigma_{N+1}$. Therefore, $(a_i^0, \dots, a_i^N, a_i^{N+1})_{i\geq 0} \in Y_{N+1}$, and it projects exactly to $y$. Hence, $p_N^{N+1}$ is surjective.

Now, define the inverse limit
\[
        Y = \varprojlim (Y_N, p_N^{N+1}).
\]
Since each $Y_N$ is a compact zero-dimensional metric space and the bonding maps are continuous surjections, $Y$ is a compact zero-dimensional metric space. The shift maps $\sigma_N$ commute with the bonding maps, naturally inducing a continuous map $S: Y \to Y$. Consequently, the inverse limit system $(Y,S)$ has shadowing.

We now define the factor map $\pi: Y \to X$. We identify a point of $Y$ with a two-dimensional array $y = (a_i^n)_{n,i\geq 0}$, where $(a_i^n)_{i\geq 0} \in \Sigma_n$ and
\[
        d(a_i^{n+1}, a_i^n) < 2\varepsilon_n
        \qquad\text{for all } n, i \geq 0.
\]
For each fixed $i$, the sequence $(a_i^n)_{n\geq 0}$ is Cauchy in $X$. Indeed, for any $m > n$, substituting $\varepsilon_k = 1/3^k$ explicitly bounds the tail:
\[
        d(a_i^m, a_i^n)
        \leq
        \sum_{k=n}^{m-1} d(a_i^{k+1}, a_i^k)
        <
        2\sum_{k=n}^{m-1} \varepsilon_k
        =
        2\sum_{k=n}^{m-1} \frac{1}{3^k}
        <
        \frac{2 \cdot 3^{-n}}{1 - 1/3}
        =
        3^{1-n}.
\]
Because $X$ is compact, the limit exists; define
\[
        x_i = \lim_{n\to\infty} a_i^n.
\]
For every $n$ and $i$, we know $d(f(a_i^n), a_{i+1}^n) < \delta_n$. Since $\delta_n \searrow 0$, passing to the limit yields $f(x_i) = x_{i+1}$. Thus, $x_i = f^i(x_0)$ for every $i \geq 0$. Define $\pi: Y \to X$ by setting
\[
        \pi(y) = x_0 = \lim_{n\to\infty} a_0^n.
\]
By construction, $\pi(Sy) = x_1 = f(x_0) = f(\pi(y))$, demonstrating that $\pi$ intertwines $S$ and $f$.

To verify continuity, first observe that for every fixed $i$ and every $m>n$,
\[
d(a_i^m,a_i^n)
\le
\sum_{k=n}^{m-1} d(a_i^{k+1},a_i^k)
<
2\sum_{k=n}^{m-1}\varepsilon_k .
\]
Passing to the limit as $m\to\infty$ gives
\[
d(\pi(y),a_i^n(y))
\le
2\sum_{k=n}^{\infty}\varepsilon_k .
\]

Now suppose that $y,z\in Y$ satisfy
\[
a_0^n(y)=a_0^n(z)
\qquad
\text{for all } n\le N .
\]
Then
\[
\begin{aligned}
d(\pi(y),\pi(z))
&\le
d(\pi(y),a_0^N(y))
+d(a_0^N(y),a_0^N(z))
+d(a_0^N(z),\pi(z))\\
&=
d(\pi(y),a_0^N(y))
+d(a_0^N(z),\pi(z))\\
&\le
4\sum_{k=N}^{\infty}\varepsilon_k.
\end{aligned}
\]
Since
$
4\sum_{k=N}^{\infty}\varepsilon_k
=
4\sum_{k=N}^{\infty}\frac1{3^k}
=
2\cdot3^{1-N},
$
the right-hand side tends to $0$ as $N\to\infty$. Therefore $\pi$ is continuous.

Finally, we prove that $\pi$ is surjective. Given any $x \in X$, for each $n$ and each $i \geq 0$, choose $a_i^n \in A_n$ such that $d(a_i^n, f^i(x)) < \eta_n$. Using \eqref{eq:unif_cont}, we have
\[
\begin{aligned}
        d(f(a_i^n), a_{i+1}^n)
        &\leq
        d(f(a_i^n), f^{i+1}(x)) + d(f^{i+1}(x), a_{i+1}^n) \\
        &<
        \delta_n/3 + \eta_n \\
        &<
        \delta_n.
\end{aligned}
\]
Hence, $(a_i^n)_{i\geq 0} \in \Sigma_n$ for every $n$. Furthermore, referencing the hierarchy in \eqref{eq:eta_bound},
\[
        d(a_i^{n+1}, a_i^n)
        \leq
        d(a_i^{n+1}, f^i(x)) + d(f^i(x), a_i^n)
        <
        \eta_{n+1} + \eta_n
        <
        2\varepsilon_n,
\]
so the vertical compatibility condition is strictly satisfied. Therefore, the array $(a_i^n)_{n,i\geq 0}$ defines a valid point $y \in Y$. Taking the limit gives
\[
        \pi(y) = \lim_{n\to\infty} a_0^n = x,
\]
proving $\pi$ is surjective. Therefore, $\pi:(Y,S)\to(X,f)$ is a continuous factor map.
\end{proof}
 
Since the bonding maps in the inverse sequence constructed above are strictly surjective, the system trivially satisfies the Mittag-Leffler condition (as discussed in Remark \ref{rmk:trivial_ML}). Applying Theorem \ref{thm:GM8}, we obtain the following:

\begin{corollary}
Every compact metric dynamical system with the shadowing property is a factor of a zero-dimensional compact metric dynamical system that also possesses the shadowing property.
\end{corollary}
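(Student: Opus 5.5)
The plan is to combine Theorem \ref{Zero-ext} with Theorem \ref{thm:GM8}, so almost nothing new has to be proved. Given a compact metric system $(X,f)$ with shadowing, Theorem \ref{Zero-ext} supplies an inverse sequence $\{(Y_N,\sigma_N),p_N^{N+1}\}$ of shifts of finite type with surjective bonding maps, together with a factor map $\pi:(Y,S)\to(X,f)$, where $(Y,S)$ is the inverse limit. The corollary then reduces to showing that $(Y,S)$ is zero-dimensional, compact, metrizable, and has shadowing.

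The topological properties are routine. $Y$ is a closed subspace of the countable product $\prod_N Y_N$. Each $Y_N$ is a closed subspace of $(A_0\times\cdots\times A_N)^\omega$, hence compact, metrizable and zero-dimensional. A countable product of such spaces is again compact, metrizable and zero-dimensional, and these properties pass to closed subspaces. The bonding maps are surjective and $Y_N\neq\varnothing$ whenever $X\neq\varnothing$, so $Y$ is nonempty.

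For shadowing, I would first recall that every shift of finite type has the shadowing property. In the metric setting this is Walters' theorem, quoted in the introduction. For the clopen-partition formulation of Good and Meddaugh, one checks directly that the partition into cylinders of a sufficiently large length witnesses shadowing of any given finite open cover. Since the bonding maps $p_N^{N+1}$ are surjective, Remark \ref{rmk:trivial_ML} shows that the Mittag-Leffler condition holds trivially. Theorem \ref{thm:GM8} then applies and gives that $(Y,S)$ has shadowing.

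The only point needing care is that the metric notion of shadowing agrees with the cover-based notion of Good and Meddaugh on compact metric spaces. This equivalence is needed so that Theorem \ref{thm:GM8} can be applied to the $Y_N$, and its conclusion read back in metric terms for $Y$. It is standard: finite open covers can be refined by covers of small mesh via a Lebesgue number argument, and conversely every $\varepsilon$-ball cover has a finite subcover. I expect this identification to be the main, though minor, obstacle. Everything else is a direct combination of results already established.
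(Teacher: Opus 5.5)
Your proposal is correct and is essentially the paper's argument: Theorem~\ref{Zero-ext} supplies the inverse sequence of SFTs with surjective bonding maps, Remark~\ref{rmk:trivial_ML} gives the Mittag-Leffler condition, and Theorem~\ref{thm:GM8} then yields shadowing for the zero-dimensional compact metric inverse limit $(Y,S)$, which factors onto $(X,f)$. Your added checks, that SFTs have shadowing in the cover sense and that metric and cover-based shadowing coincide on compact metric spaces, make explicit what the paper uses tacitly.
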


We briefly recall the notion of an ALP factor map introduced by Good and
Meddaugh \cite{GM}. Let $\phi:(Y,g)\to (X,f)$ be a factor map between compact
metric dynamical systems. We say that $\phi$ \emph{almost lifts pseudo-orbits}, or that
$\phi$ is an ALP map, if for every $\varepsilon>0$ and every $\eta>0$ there exists
$\delta>0$ such that every $\delta$-pseudo-orbit $\langle x_i\rangle$ in $X$ admits
an $\eta$-pseudo-orbit $\langle y_i\rangle$ in $Y$ satisfying
$d(\phi(y_i),x_i)<\varepsilon$ for all $i\in\omega$. In other words, pseudo-orbits
in the factor can be lifted, up to a prescribed error, to pseudo-orbits in the
extension.

Good and Meddaugh proved that shadowing implies being an ALP factor of an
inverse sequence of SFTs, and conversely that being an ALP factor of an ML inverse
sequence of SFTs implies shadowing \cite[Corollary 28]{GM}. The missing point was whether the inverse
sequence in the necessary direction could be chosen to satisfy the Mittag-Leffler
condition. Theorem \ref{Zero-ext} gives an affirmative answer in a
stronger form: the bonding maps can be chosen to be surjective. Hence we obtain a
complete metric characterization in terms of ALP factors of ML inverse sequences
of SFTs.

\begin{corollary}\label{metc}
Let $X$ be a compact metric space and let $f: X\to X$ be a continuous map. Then the following assertions are equivalent:
\begin{itemize}
    \item $f$ has the shadowing property;
    \item $(X, f)$ is an ALP factor of a zero-dimensional compact metric dynamical system with the shadowing property;
    \item $(X, f)$ is an ALP factor of the inverse limit of an inverse sequence of shifts of finite type (SFTs) with surjective bonding maps.
\end{itemize}
\end{corollary}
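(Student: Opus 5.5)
We will prove the cycle of implications (i)$\Rightarrow$(iii)$\Rightarrow$(ii)$\Rightarrow$(i), where (i), (ii), (iii) denote the three assertions in the order listed.

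\emph{(ii)$\Rightarrow$(i).} This is the ALP preservation result of Good and Meddaugh \cite[Section 5]{GM}: an ALP factor of a shadowing system has shadowing. For completeness we recall the argument. Given $\varepsilon>0$, choose $\eta$ so that $\eta$-pseudo-orbits in $Y$ are $\gamma$-shadowed, where $\gamma$ is chosen by uniform continuity of $\phi$ so that $d(y,y')<\gamma$ implies $d(\phi(y),\phi(y'))<\varepsilon/2$. Then take $\delta$ from the ALP property for the pair $(\varepsilon/2,\eta)$. A $\delta$-pseudo-orbit $\langle x_i\rangle$ in $X$ lifts to an $\eta$-pseudo-orbit $\langle y_i\rangle$ with $d(\phi(y_i),x_i)<\varepsilon/2$. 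This pseudo-orbit is $\gamma$-shadowed by some $y$, and the triangle inequality then shows that $\phi(y)$ $\varepsilon$-shadows $\langle x_i\rangle$.

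\emph{(iii)$\Rightarrow$(ii).} The inverse limit of an inverse sequence of SFTs is a compact zero-dimensional metric space. Each SFT has shadowing by Walters' theorem, and surjective bonding maps give the Mittag-Leffler condition (Remark \ref{rmk:trivial_ML}). Theorem \ref{thm:GM8} therefore shows that the inverse limit has shadowing.

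\emph{(i)$\Rightarrow$(iii).} This is the substantive step. We take the inverse sequence $\{Y_N,p_N^{N+1}\}$ and the factor map $\pi:(Y,S)\to(X,f)$ constructed in Theorem \ref{Zero-ext}, and show that $\pi$ is ALP. Fix $\varepsilon,\eta>0$. A compatible metric on $Y$ is given by $\rho(y,z)=2^{-k}$, where $k$ is the least index such that some entry $a_i^n$ with $i,n\le k$ differs between $y$ and $z$. Choose $K$ large, so that $2^{-K}<\eta$ and $2\cdot 3^{1-K}<\varepsilon/2$. Set $\delta=\delta'$, where $\delta'$ is chosen from shadowing of $f$ so that $\delta'$-pseudo-orbits are $\varepsilon'$-shadowed with $\varepsilon'\le\min\{\varepsilon/2,\eta_{n}\ \text{for } n\le K\}$ (further reduced if needed).

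Given a $\delta$-pseudo-orbit $\langle x_i\rangle$, let a point $x$ $\varepsilon'$-shadow it. We lift $x$ exactly as in the surjectivity part of the proof of Theorem \ref{Zero-ext}, obtaining $y\in Y$ with $\pi(y)=x$ and all entries approximating $f^i(x)$. Now set $y_i=S^i(y)$. This is a genuine orbit, hence an $\eta$-pseudo-orbit for every $\eta$. Moreover,
\[
d(\pi(y_i),x_i)=d(f^i(x),x_i)<\varepsilon'\le\varepsilon .
\]
So the ALP condition holds almost trivially. The only subtlety is checking that ALP, as defined above, does not require $y_0$ to be tied to anything beyond this estimate. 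With the definition as stated, shadowing in $X$ together with surjectivity of $\pi$ suffices, and the choice of $K$ and of the metric $\rho$ is not even needed.

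The main obstacle is therefore just confirming that this reading of the ALP definition is the intended one. If a stronger, orbit-independent version of ALP were intended, we would instead lift each $x_i$ separately, choosing entries $a^n_i$ near $x_i$ for $n\le K$. This produces a sequence in $Y$ whose horizontal steps fail only at levels beyond $K$, and those levels are invisible in $\rho$ up to $2^{-K}<\eta$.
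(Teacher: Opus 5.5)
Your proposal is correct and follows the route the paper intends. Theorem \ref{Zero-ext}, together with the observation that any factor map onto a shadowing system is ALP (take a genuine orbit over the shadowing point), gives (i)$\Rightarrow$(iii); surjective bonding maps plus Theorem \ref{thm:GM8} give (iii)$\Rightarrow$(ii); and the Good--Meddaugh ALP preservation argument, which you reproduce correctly, gives (ii)$\Rightarrow$(i). The metric $\rho$, the constant $K$ and your fallback paragraph are unnecessary, and the fallback is not a complete argument. The ALP definition used in the paper is exactly the one under which your genuine-orbit lift works.
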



\begin{remark}
It is natural to ask whether the extension obtained in Theorem \ref{Zero-ext}
can be chosen to be trivial when the original space is already
zero-dimensional. More precisely, if $(X,f)$ is a compact zero-dimensional
metric system with shadowing, one may ask whether $(X,f)$ is conjugate to the
inverse limit of an inverse sequence of shifts of finite type whose bonding
maps are surjective.

The construction in Theorem \ref{Zero-ext} does not give such a conclusion.
The point is that the scales used in the proof have two different roles. For
each $n$, the finite set $A_n$ has to be chosen as an $\eta_n$-net, where
$\eta_n$ is subordinate to a pseudo-orbit scale $\delta_n$, while shadowing only
asserts that $\delta_n$-pseudo-orbits are $\varepsilon_n$-shadowed. In general
there is no reason to expect that these scales can be chosen in a
self-compatible way, for instance so that $\varepsilon_n$ itself witnesses
$\varepsilon_n$-shadowing.
\end{remark}

\begin{example}
Let $X=\{0\}\cup\{x_n,y_n:n\geq 1\}$. Define an ultrametric $d$ on $X$ as
follows. Put
\[
    d(0,x_n)=d(0,y_n)=r_n,\qquad d(x_n,y_n)=s_n,
\]
where $r_n=2^{-n}$ and $s_n=2^{-(n+2)}$. If $m\neq n$,
$u\in\{x_n,y_n\}$ and $v\in\{x_m,y_m\}$, set
\[
    d(u,v)=\max\{r_n,r_m\}.
\]
Then $X$ is a compact zero-dimensional metric space, and $x_n\to 0$ and
$y_n\to 0$.

Define $f:X\to X$ by $f(0)=0$, $f(x_n)=y_n$, and $f(y_n)=0$. Then $f$ is
continuous.

We claim that $(X,f)$ has the shadowing property. Let $\varepsilon>0$ be given.
Choose $N\geq 1$ such that $r_N<\varepsilon$, and put
\[
    T_N=\{0\}\cup\{x_n,y_n:n\geq N\}.
\]
Then $T_N$ has diameter at most $r_N$, and hence its diameter is less than
$\varepsilon$. Let
\[
    F_N=X\setminus T_N=\{x_1,y_1,\ldots,x_{N-1},y_{N-1}\}.
\]
Shrinking $\varepsilon$ if necessary,  we may assume
$F_N\neq\varnothing$.

Choose $\delta>0$ such that
\[
    \delta< r_{N-1}
    \quad\text{and}\quad
    \delta<s_j \text{ for every } 1\leq j<N.
\]
We show that every $\delta$-pseudo-orbit is $\varepsilon$-shadowed. Let
$(z_i)_{i\geq 0}$ be a $\delta$-pseudo-orbit, that is,
$d(f(z_i),z_{i+1})<\delta$ for every $i\geq 0$.

First observe that once the pseudo-orbit enters $T_N$, it never leaves $T_N$.
Indeed, $f(T_N)\subseteq T_N$. If $z_i\in T_N$ and $z_{i+1}\in F_N$, then
$d(f(z_i),z_{i+1})\geq r_{N-1}>\delta$, a contradiction.

Now suppose first that $z_0\in T_N$. By the observation above, $z_i\in T_N$ for
all $i\geq 0$. Since $0$ is fixed and $T_N$ has diameter less than
$\varepsilon$, we have $d(f^i(0),z_i)=d(0,z_i)<\varepsilon$ for all $i\geq 0$.
Thus $(z_i)$ is $\varepsilon$-shadowed by $0$.

It remains to consider the case $z_0\in F_N$. If $z_0=x_j$ for some $j<N$, then
$f(z_0)=y_j$. Since $\delta<s_j$ and the nearest point to $y_j$ different from
$y_j$ is at distance at least $s_j$, the condition
$d(f(z_0),z_1)<\delta$ forces $z_1=y_j$. Next, since $f(y_j)=0$ and
$\delta<r_{N-1}$, the condition $d(0,z_2)<\delta$ implies $z_2\in T_N$.
By the previous observation, $z_i\in T_N$ for all $i\geq 2$. Therefore the
genuine orbit of $x_j$ $\varepsilon$-shadows $(z_i)$: indeed,
$f^0(x_j)=z_0$, $f(x_j)=z_1$, and for every $i\geq 2$ we have
$f^i(x_j)=0$ and $z_i\in T_N$, so $d(f^i(x_j),z_i)<\varepsilon$.

Finally, if $z_0=y_j$ for some $j<N$, then $f(z_0)=0$. Since
$\delta<r_{N-1}$, the condition $d(0,z_1)<\delta$ implies $z_1\in T_N$.
Hence $z_i\in T_N$ for all $i\geq 1$. Thus the genuine orbit of $y_j$
$\varepsilon$-shadows $(z_i)$, because $f^0(y_j)=z_0$ and $f^i(y_j)=0$ for
all $i\geq 1$, while $z_i\in T_N$ for all $i\geq 1$.

Therefore every $\delta$-pseudo-orbit is $\varepsilon$-shadowed, and so
$(X,f)$ has shadowing.

However, this system does not admit self-shadowing at arbitrarily small metric
scales for the above ultrametric. More precisely, for every sufficiently small
$\varepsilon>0$, there exists $n$ such that $s_n<\varepsilon\leq r_n$. Then the
constant sequence $(x_n,x_n,x_n,\ldots)$ is an $\varepsilon$-pseudo-orbit, since
\[
    d(f(x_n),x_n)=d(y_n,x_n)=s_n<\varepsilon.
\]
Suppose that this pseudo-orbit were $\varepsilon$-shadowed by some $z\in X$.
Since every orbit in this system eventually reaches $0$, we would have
$f^i(z)=0$ for all sufficiently large $i$. Therefore, for all sufficiently large
$i$,
\[
    d(f^i(z),x_n)=d(0,x_n)=r_n\geq \varepsilon,
\]
a contradiction.

Thus, although $(X,f)$ has shadowing, there is no decreasing sequence
$\varepsilon_k\searrow 0$ such that every $\varepsilon_k$-pseudo-orbit is
$\varepsilon_k$-shadowed with respect to this ultrametric. Consequently, the
injectivity argument suggested after Theorem \ref{Zero-ext} cannot be obtained
merely by replacing the metric with an ultrametric or by refining the finite
nets.

Let us also note that this example does not provide a negative answer to the question of whether every compact zero-dimensional metric system with shadowing can be represented as the inverse limit of shifts of finite type with surjective bonding maps. Indeed, the system above is itself the inverse limit of a sequence of finite dynamical systems with surjective bonding maps, and every finite dynamical system is conjugate to a (one-sided) shift of finite type.
\end{example}

\section{Uniform Approximation by Metrizable Systems}\label{s4}

Theorem \ref{Zero-ext} gives a zero-dimensional shadowing extension for compact metric systems with shadowing, and the extension is obtained as an inverse limit of shifts of finite type with surjective bonding maps. As noted at the end of the previous section, we do not know whether the same conclusion remains valid for arbitrary compact Hausdorff spaces. The main difficulty is that the proof of Theorem \ref{Zero-ext} depends on metric control: finite $\eta$-nets, uniform continuity estimates at numerical scales, and the ability to approximate genuine orbits coherently at all finer metric resolutions. In this section we prove a different, but still useful, reduction theorem for the non-metrizable case. Instead of trying to construct a zero-dimensional shadowing extension directly, we represent a compact Hausdorff shadowing system as an inverse limit of metrizable shadowing systems. The construction uses the unique uniformity of a compact Hausdorff space. Roughly speaking, sufficiently fine closed equivalence relations compatible with the uniform structure produce metrizable factor systems, and the shadowing property passes to these factors in a controlled way. The original system is then recovered as the inverse limit of all such metrizable factors. The result gives the final step in the structural picture described in the introduction. Starting from shifts of finite type, compact zero-dimensional shadowing systems are obtained by Mittag-Leffler inverse limits; compact metric shadowing systems are obtained from these by ALP factor maps; and arbitrary compact Hausdorff shadowing systems are obtained by one further inverse-limit step through metrizable shadowing factors.

\subsection{Uniform Spaces and Uniform Shadowing}

We first recall the basic definitions of uniform spaces \cite{Eng}. Let $X$ be a set. The diagonal of the Cartesian product $X \times X$ is denoted by $\Delta = \{ (x, x) : x \in X \}$. The composition of two relations $A, B \subseteq X \times X$ is defined as 
\[A + B = \{ (x, z) : \exists y \in X \text{ such that } (x, y) \in A \text{ and } (y, z) \in B \}.\]
 For a relation $A$ containing $\Delta$ and $n \in \omega$, $nA$ is defined inductively by $0A = \Delta$ and $(n+1)A = nA + A$. A set $V \subseteq X \times X$ is called an \emph{entourage of the diagonal} if $\Delta \subseteq V$ and $V = -V$ (where $-V$ is the inverse relation). Let $\mathcal{D}_X$ denote the family of all such entourages. 

A \emph{uniformity} on $X$ is a filter $\mathcal{U} \subseteq \mathcal{D}_X$ satisfying the standard axioms of entourages: it is closed under finite intersections, every $V \in \mathcal{U}$ contains an entourage $U \in \mathcal{U}$ such that $2U \subseteq V$, and $\bigcap \mathcal{U} = \Delta$. A family $\mathcal{B} \subseteq \mathcal{U}$ is a \emph{base} for the uniformity if every member of $\mathcal{U}$ contains a member of $\mathcal{B}$.

It is a fundamental result in general topology \cite[Section 8.3]{Eng} that if $(X, \tau)$ is a compact Hausdorff space, there exists a \emph{unique} uniformity $\mathcal{U}$ compatible with the topology $\tau$. This unique uniformity can be explicitly constructed using the collection of all finite open covers $\mathfrak{F}$. Specifically, for any open cover $\mathcal{C} \in \mathfrak{F}$, one associates the entourage $E_{\mathcal{C}} = \bigcup_{C \in \mathcal{C}} (C \times C)$. The family $\{ E_{\mathcal{C}} : \mathcal{C} \in \mathfrak{F} \}$ forms a base for the unique uniformity $\mathcal{U}$. This demonstrates that on a compact space, the uniform structure is completely dictated by its topological covering properties.

Leveraging this uniform structure, we can naturally reformulate the shadowing property without relying on either metrics or explicitly keeping track of open covers.

\begin{definition}[Uniform Shadowing Property]
Let $(X, \mathcal{U})$ be a uniform space, $f: X \to X$ a continuous map, and $U \in \mathcal{U}$. A sequence $\langle x_n \rangle$ is called a \emph{$U$-pseudo-orbit} if $(x_{n+1}, f(x_n)) \in U$ for all $n \in \omega$. A point $x \in X$ is said to \emph{$U$-shadow} $\langle x_n \rangle$ if $(x_n, f^n(x)) \in U$ for all $n \in \omega$. 

The continuous map $f$ is said to have the \emph{shadowing property} if for every $U \in \mathcal{U}$, there exists $V \in \mathcal{U}$ such that every $V$-pseudo-orbit is $U$-shadowed by some point in $X$.
\end{definition}

Due to the aforementioned equivalence between entourages and finite open covers on compact Hausdorff spaces, the uniform shadowing property precisely coincides with the topological shadowing property formulated in Definition \ref{shadcom}.

\subsection{Inverse Approximation by Metrizable Factors}

We now establish the mechanism to approximate general shadowing systems via metrizable factors. 

\begin{lemma}\label{keylemma}
Let $X$ be a compact Hausdorff space and let $f: X\to X$ be continuous. 
If $f$ has the shadowing property, then for any sequence $(V_n)_{n\in \omega}$ of entourages of $X$, 
there exists a factor map $\varphi:(X,f)\to (Z,g)$ such that 
\begin{itemize}
\item[(1)] $Z$ is a metric space;
\item[(2)] $g$ has the shadowing property;
\item[(3)] the metric $d$ on $Z$ is bounded by $2$ and satisfies
\begin{equation}\label{eq:factor-control}
\varphi^{-1}\bigl(\{(z_1,z_2): d(z_1,z_2)<{2^{-(n+2)}}\}\bigr)
\subseteq  V_n,
\end{equation}
for any $n\in \omega$.
\end{itemize}
\end{lemma}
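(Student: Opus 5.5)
We will build a countable sequence of closed symmetric entourages $W_0\supseteq W_1\supseteq\cdots$ of $X$ with $W_{k+1}+W_{k+1}+W_{k+1}\subseteq W_k$, invariant enough under $f$, and then let $Z$ be the quotient of $X$ by the closed equivalence relation $E=\bigcap_k W_k$. The standard metrization lemma for uniformities with a countable base (as in \cite[Section 8.1]{Eng}) then produces a pseudometric $\rho$ on $X$ with
\[
W_{k+1}\subseteq\{(x,y):\rho(x,y)<2^{-k}\}\subseteq W_k ,
\]
and $d$ is the metric induced by $\rho$ on $Z=X/E$. After truncating, $d\le 2$. The map $\varphi$ is the quotient map. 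Condition \eqref{eq:factor-control} then follows once we arrange $W_{n+1}\subseteq V_n$, possibly with an index shift absorbed into the choice of the sequence.

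The inductive choice of $W_{k+1}$ given $W_0,\dots,W_k$ imposes four requirements.
\begin{itemize}
\item[(a)] \emph{Shrinking and control:} $3W_{k+1}\subseteq W_k\cap V_k$, with $W_{k+1}$ closed. This is possible because closed entourages form a base in a compact space.
\item[(b)] \emph{Uniform continuity of $f$:} $(f\times f)(W_{k+1})\subseteq W_k$. This makes $E$ $f$-invariant, so $g:Z\to Z$, $g([x])=[f(x)]$, is well defined and continuous, and $\varphi$ becomes a factor map.
\item[(c)] \emph{Shadowing:} every $W_{k+1}$-pseudo-orbit of $f$ is $W_k$-shadowed.
\item[(d)] \emph{Compatibility with $E$:} each $W_k$ is $E$-saturated, i.e.\ $E+W_k+E=W_k$. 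This ensures that $W_k$ descends to an entourage of $Z$ and that $\rho$ is constant on $E$-classes.
\end{itemize}
Metrizability of $Z$ follows because the images of the $W_k$ form a countable base of the uniformity of $Z$, by compactness.

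Requirement (d) is circular, since $E$ depends on all the $W_k$. We handle it by using the pseudometric directly: $\rho(x,y)=0$ iff $(x,y)\in E$, and the sets $\{\rho<2^{-k}\}$ are automatically $E$-saturated by the triangle inequality. So we never need saturation of the $W_k$ themselves. We only use the sandwich above between the $W$'s and the $\rho$-balls.

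The main obstacle is proving that $g$ has shadowing on $(Z,d)$. Given $\varepsilon=2^{-k}$, we take $\delta=2^{-(k+3)}$ and start from a $\delta$-pseudo-orbit $(z_i)$ in $Z$. We pick arbitrary lifts $x_i\in\varphi^{-1}(z_i)$. Then
\[
\rho(f(x_i),x_{i+1})=d(g(z_i),z_{i+1})<\delta ,
\]
because $\rho$ is $E$-invariant and $\varphi\circ f=g\circ\varphi$. Hence $(f(x_i),x_{i+1})\in W_{k+2}$, so $(x_i)$ is a $W_{k+2}$-pseudo-orbit, and by (c) it is $W_{k+1}$-shadowed by some $x\in X$. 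Then
\[
d\bigl(g^i(\varphi x),z_i\bigr)=\rho\bigl(f^i(x),x_i\bigr)<2^{-k},
\]
which gives the required shadowing. The delicate points in this chain are exactly the ones requirement (d) would have handled: lifts are arbitrary, and $\rho$ must be $f$-compatible. Both are resolved by defining everything through $\rho$ and using (b).
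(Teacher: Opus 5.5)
Your proof is correct, but it reaches the metrizable factor by a different route from the paper. You add the uniform-continuity requirement $(f\times f)(W_{k+1})\subseteq W_k$ to the recursive choice, and this makes $E=\{\rho=0\}=\bigcap_k W_k$ an $f$-invariant equivalence relation. So $Z=X/E$ with the quotient metric $d([x],[y])=\rho(x,y)$ is already a dynamical factor, and $g$ is uniformly continuous by (b) and the sandwich: $\rho(x,y)<2^{-(k+2)}$ implies $\rho(f(x),f(y))<2^{-k}$. Shadowing then transfers with a single-scale witness. A $2^{-(k+3)}$-pseudo-orbit in $Z$ lifts, through arbitrary lifts, to a $W_{k+2}$-pseudo-orbit in $X$; this is $W_{k+1}$-shadowed, which gives $2^{-k}$-shadowing in $Z$. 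The control condition also comes out directly: $\{\rho<2^{-(n+2)}\}\subseteq W_{n+2}\subseteq W_{n+1}\subseteq V_n$.

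The paper does not impose (b). It quotients by $\{\varrho=0\}$, which need not be $f$-invariant, and forces invariance by passing to the orbit-sequence space of classes $([x],[f(x)],[f^2(x)],\dots)$. There it uses the weighted metric $\sum_n 2^{-n}\varrho(f^n(x),f^n(y))$, for which $g$ is automatically $2$-Lipschitz. The price is that closeness in $Z$ involves finitely many iterates at once. The paper therefore needs the stronger condition that $U_{n+1}$ witness $\bigcap_{i\le n}(f\times f)^{-i}(U_n)$-shadowing, plus a tail estimate.

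Your route is shorter and presents $Z$ simply as the quotient of $X$ by a closed invariant equivalence relation. The paper's orbit-sequence device is the more universal one, since it turns any metrizable quotient into a factor without coordinating the scales with $f$. Continuity of $f$ is used in both proofs: in yours through (b), in the paper's through the iterated witness condition.

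Two small points. Your requirement (d) is indeed superfluous, as you noticed. Also set $W_0=X\times X$ explicitly so that the metrization lemma applies as stated. The remark about images of the $W_k$ forming a base on $Z$ is not needed either: continuity of $\rho$ already makes $\varphi:X\to(Z,d)$ a continuous surjection, hence $(Z,d)$ is a compact metric space.
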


\begin{proof}
Let $U_0=X\times X$. 
Recursively choose a sequence $(U_n)_{n\in \omega}$ of entourages of $X$, such that 
\begin{itemize}
\item $U_{n+1}\subseteq V_n$;
\item $3U_{n+1}\subseteq {U_n}$;
\item $U_{n+1}$ witnesses $\bigcap_{i=0}^n (f\times f)^{-i}(U_n)$-shadowing.
\end{itemize}

It is well known (see, e.g., \cite[Theorem 8.1.10]{Eng}) that for a such a sequence, there exists a uniformly continuous pseudometric $\varrho$ on $X$ such that for each $n\in \omega$,
\begin{equation}\label{e1}
\{(x,y):\varrho(x,y)<2^{-(n+1)}\}
\subseteq U_n
\subseteq \{(x,y):\varrho(x,y)<2^{-n}\}.
\end{equation}
It is clear that $\varrho$ is bounded by $1$.

Let $Y=(X,\varrho)/\sim$, where $x\sim y$ iff $\varrho(x,y)=0$. The equivalence class of $x\in X$ in $Y$ is denoted by $[x]$.

Let $Z$ be the set of all sequences of the form
\[
( [x],[f(x)],[f^2(x)],\ldots).
\]
Define $g:Z\to Z$ by the shift
\[
([x],[f(x)],[f^2(x)],\ldots)
\mapsto
([f(x)],[f^2(x)],[f^3(x)],\ldots).
\]

The map
\[
\varphi:x\mapsto ( [x],[f(x)],[f^2(x)],\ldots)
\]
is continuous and onto, and satisfies $\varphi\circ f=g\circ \varphi$, 
so $\varphi$ is a factor map.

For $x,y\in X$, define
\[
d(\varphi(x),\varphi(y))
=\sum_{n=0}^\infty \frac{1}{2^n}\varrho(f^n(x),f^n(y)).
\]
Then $d$ is a metric on $Z$ bounded by $2$, and $\varphi$ is continuous.
Moreover, if $d(\varphi(x), \varphi(y))<2^{-(n+2)}$, then $\varrho(x, y)<2^{-(n+2)}$. So by (\ref{e1}) we have 
\[(x, y)\in U_{n+1}\subseteq V_n,\]
 and therefore (\ref{eq:factor-control}) is satisfied.

It remains to show that $(Z,g)$ has the shadowing property. 
Let $\varepsilon>0$ with $\varepsilon<1$. 
Choose $k\in \omega$ and $0<\varepsilon'<\tfrac{1}{2}$ such that
\[
\varepsilon'\sum_{n=0}^k \frac{1}{2^n}+\frac{1}{2^k}<\varepsilon.
\]

Take $m\in \omega$ such that $2^{-m}\le \varepsilon'<2^{-(m-1)}$. 
Shrinking $\varepsilon'$ if necessary, assume $m>k$. 
Let 
$
\delta=2^{-(m+2)}.
$

Let $\langle \varphi(x_0),\varphi(x_1),\ldots\rangle$ be a $\delta$-pseudo-orbit in $Z$. 
Then for all $i\in \omega$,
\[
d(\varphi(f(x_i)),\varphi(x_{i+1}))<\delta,
\]
hence
\[
\sum_{n=0}^\infty \frac{1}{2^n}\varrho(f^{n+1}(x_i),f^n(x_{i+1}))<2^{-(m+2)}.
\]
In particular,
\[
\varrho(f(x_i),x_{i+1})<2^{-(m+2)}.
\]
Thus $(x_0,x_1,\ldots)$ is a $U_{m+1}$-pseudo-orbit by (\ref{e1}).

Since $ U_{m+1}$ witnesses $\bigcap_{i=0}^m (f\times f)^{-i}( U_m)$-shadowing and $k<m$, 
there exists $z\in X$ such that for all $n\le k$ and all $i\in \omega$, 
 $(f^n(x_i),f^{n+i}(z))\in U_m$. 
Hence, by (\ref{e1}), we have
\[
\varrho(f^n(x_i),f^{n+i}(z))<2^{-m}.
\]

Fix any $i\in \omega$. Then
\[
\begin{aligned}
d(\varphi(x_i),g^i(\varphi(z)))
&=\sum_{n=0}^\infty \frac{1}{2^n}\varrho(f^n(x_i),f^{n+i}(z))\\
&\le \sum_{n=0}^k \frac{1}{2^n}\varrho(f^n(x_i),f^{n+i}(z)) + \frac{1}{2^k}\\
&< 2^{-m}\sum_{n=0}^k \frac{1}{2^n} + \frac{1}{2^k}\\
&\leq \varepsilon'\sum_{n=0}^k \frac{1}{2^n} + \frac{1}{2^k}\\
&< \varepsilon.
\end{aligned}
\]

Thus $(\varphi(x_0),\varphi(x_1),\ldots)$ is $\varepsilon$-shadowed by $\varphi(z)$.
\end{proof}

\begin{corollary}\label{Coro:shafac}
Let $X$ be a compact space and $f: X\to X$ be a continuous map with the shadowing property. 
Then for any factor map $\psi:(X,f)\to (Y,h)$ with $Y$ metrizable, there exists a factor map $\varphi:(X,f)\to (Z,g)$ such that
\begin{itemize}
\item[(i)] $Z$ is metrizable;
\item[(ii)] $g$ has the shadowing property;
\item[(iii)] $\psi$ factors through $\varphi$, i.e., there exists a factor map $\nu:(Z,g)\to (Y,h)$ such that
\[
\psi=\nu\circ \varphi.
\]
\end{itemize}
\end{corollary}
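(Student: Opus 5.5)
We will apply Lemma \ref{keylemma} with a sequence of entourages $(V_n)$ chosen so that the control condition (\ref{eq:factor-control}) forces $\psi$ to factor through $\varphi$. Fix a compatible metric $\rho_Y$ on $Y$. Since $X$ is compact, $\psi$ is uniformly continuous. Hence for each $n\in\omega$ the set
\[
V_n=(\psi\times\psi)^{-1}\bigl(\{(y_1,y_2):\rho_Y(y_1,y_2)<2^{-n}\}\bigr)
\]
is a (symmetric, open) entourage of $X$. Lemma \ref{keylemma} then gives a factor map $\varphi:(X,f)\to(Z,g)$ with $Z$ a metric space, $g$ having shadowing, and
\[
\varphi^{-1}\bigl(\{d<2^{-(n+2)}\}\bigr)\subseteq V_n \quad\text{for all } n.
\]
Properties (i) and (ii) are then immediate.

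For (iii), we define $\nu:Z\to Y$ by $\nu(\varphi(x))=\psi(x)$. This is well defined: if $\varphi(x)=\varphi(x')$, then $(x,x')$ lies in every $\varphi^{-1}(\{d<2^{-(n+2)}\})$. Hence $(x,x')\in V_n$ for all $n$, so $\rho_Y(\psi(x),\psi(x'))<2^{-n}$ for all $n$, and therefore $\psi(x)=\psi(x')$.

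The same estimate gives uniform continuity of $\nu$ with respect to $d$: $d(\varphi(x),\varphi(x'))<2^{-(n+2)}$ implies $\rho_Y(\psi(x),\psi(x'))<2^{-n}$. Thus $\nu$ is continuous. It is surjective because $\psi$ is. It is equivariant because
\[
\nu(g(\varphi(x)))=\nu(\varphi(f(x)))=\psi(f(x))=h(\psi(x))=h(\nu(\varphi(x))).
\]
Hence $\nu$ is a factor map and $\psi=\nu\circ\varphi$.

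One subtle point must be checked: that $d$ induces the quotient topology on $Z$, i.e.\ that $Z$ is genuinely a compact metrizable dynamical system. The lemma asserts that $\varphi$ is a continuous surjection onto the metric space $(Z,d)$. Since $X$ is compact, $(Z,d)$ is then compact metric, so this is fine. In particular, continuity of $\nu$ in the $d$-topology is exactly what is needed, and no separate quotient-topology argument is required. The only real content is therefore the choice of $V_n$ above; I expect no serious obstacle beyond confirming that these pulled-back sets are indeed entourages of the unique uniformity, which follows from uniform continuity of $\psi$ on the compact space $X$.
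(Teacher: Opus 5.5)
Your proposal is correct and is essentially the paper's own proof. It feeds the same pulled-back entourages $V_n=(\psi\times\psi)^{-1}(\{\rho_Y<2^{-n}\})$ into Lemma~\ref{keylemma}, and then shows that $\nu$ is well defined, uniformly continuous, surjective and equivariant by the same estimates, so your extra check that the $d$-topology on $Z$ agrees with the quotient topology is harmless.
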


\begin{proof}
Let $d_Y$ be a compatible metric on $Y$ bounded by $1$. 
For each $n\in \omega$, define
\[
V_n=\{(x_1,x_2)\in X\times X:
d_Y(\psi(x_1),\psi(x_2))<2^{-n}\}.
\]
Then each $V_n$ is an entourage of $X$, by (uniform) continuity of $\varphi$.

Applying Lemma~\ref{keylemma} to $V_n$, we obtain a factor map
\[
\varphi:(X,f)\to (Z,g)
\]
such that:
\begin{itemize}
\item $Z$ is metrizable;
\item $g$ has the shadowing property;
\item there exists a metric $d$ on $Z$ such that
\begin{equation}\label{eq:factor-control-n}
\varphi^{-1}\bigl(\{(z_1,z_2)\in Z\times Z:
d(z_1,z_2)<2^{-(n+2)}\}\bigr)
\subseteq V_n.
\end{equation}
\end{itemize}

We verify (iii). 
Suppose that $\varphi(x_1)=\varphi(x_2)$. 
Then for every $n\in \omega$,
\[
d(\varphi(x_1),\varphi(x_2))=0<2^{-(n+2)}.
\]
Hence, by \eqref{eq:factor-control-n},
$
(x_1,x_2)\in V_n.
$
Therefore,
$
d_Y(\psi(x_1),\psi(x_2))<2^{-n}
$
for every $n\in \omega$, implying that
$
\psi(x_1)=\psi(x_2).
$

Thus one may define a map $\nu:Z\to Y$ by
$
\nu(z)=\psi(x),
$
where $x\in \varphi^{-1}(z)$ is arbitrary. 
The preceding argument shows that $\nu$ is well defined, and clearly
\[
\psi=\nu\circ \varphi.
\]

To prove continuity of $\nu$, let $n\in \omega$. 
If $d(z_1,z_2)<2^{-(n+2)}$, then by \eqref{eq:factor-control-n},
\[
d_Y(\nu(z_1),\nu(z_2))<2^{-n}.
\]
Hence $\nu$ is uniformly continuous.

Moreover, since
$
\psi=\nu\circ\varphi
$
and both $\psi$ and $\varphi$ are surjective, $\nu$ is surjective.

Finally, since both $\psi$ and $\varphi$ intertwine the dynamics,
\[
\nu\circ g=h\circ \nu.
\]
Therefore $\nu:(Z,g)\to (Y,h)$ is a factor map.
\end{proof}

Now we are ready to prove the last result.
\begin{theorem}\label{ThC}
Let $(X,f)$ be a compact dynamical system. 
If $f$ has the shadowing property, then $(X,f)$ is conjugate to the inverse limit of an inverse system of metrizable dynamical systems with the shadowing property and factor bonding maps.
\end{theorem}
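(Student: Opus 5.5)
The plan is to index the inverse system by the metrizable shadowing factors of $(X,f)$ themselves, and to use Corollary \ref{Coro:shafac} to show that this index family is directed and cofinal. Concretely, let $I$ be the set of (isomorphism classes of) factor maps $\varphi:(X,f)\to(Z,g)$ with $Z$ metrizable and $g$ having shadowing. To avoid set-theoretic issues, I would represent each such factor by the closed invariant equivalence relation $R_\varphi=\{(x,y):\varphi(x)=\varphi(y)\}$ on $X$, so that $I$ is a set. Order $I$ by $\varphi\le\varphi'$ iff $R_{\varphi'}\subseteq R_{\varphi}$. In that case there is a unique map $\nu:Z'\to Z$ with $\varphi=\nu\circ\varphi'$. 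It is continuous because $Z'$ is a quotient of the compact space $X$, it is onto, and it intertwines the dynamics. These maps $\nu$ serve as the bonding maps, and compatibility is automatic from uniqueness.

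Directedness comes next. Given $\varphi_1,\varphi_2\in I$, form the diagonal factor $\psi=(\varphi_1,\varphi_2):X\to\psi(X)\subseteq Z_1\times Z_2$. Its image is metrizable, though it need not have shadowing. Corollary \ref{Coro:shafac} then yields $\varphi\in I$ through which $\psi$ factors, so $R_\varphi\subseteq R_{\psi}=R_{\varphi_1}\cap R_{\varphi_2}$. Hence $\varphi$ lies above both. The index set is also nonempty: the trivial one-point factor has shadowing.

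Now let $(Y,h)$ be the inverse limit. By Proposition \ref{prop:universal}, the family $\{\varphi\}_{\varphi\in I}$ induces a morphism $\theta:X\to Y$. For surjectivity of $\theta$, fix $y=(z_\varphi)\in Y$. The fibres $\varphi^{-1}(z_\varphi)$ are nonempty compact sets, and they form a filtered family: for $\varphi\le\varphi'$ we have $\varphi'^{-1}(z_{\varphi'})\subseteq\varphi^{-1}(z_\varphi)$, and $I$ is directed. So their intersection is nonempty, and any point in it maps to $y$. For injectivity, take $x\ne x'$. Pick a continuous function $u:X\to[0,1]$ separating them and consider the metrizable factor $\psi:x\mapsto(u(f^n x))_{n}\in[0,1]^\omega$ onto its image. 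Then Corollary \ref{Coro:shafac} gives $\varphi\in I$ with $R_\varphi\subseteq R_\psi$, so $\varphi(x)\ne\varphi(x')$. Since $X$ is compact and $Y$ is Hausdorff, $\theta$ is a conjugacy.

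I expect the main obstacle to be the set-theoretic and well-definedness bookkeeping of the index set rather than any dynamics. The shadowing content is entirely absorbed by Corollary \ref{Coro:shafac}, which is used twice: once for directedness and once for point separation. The care needed is in identifying factors via closed invariant equivalence relations, and in checking that the induced maps $\nu$ are continuous and form a genuine inverse system. A further small point is that $I$ is only preordered if distinct factors give the same relation. Using the relations themselves as indices resolves this.
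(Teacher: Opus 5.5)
Your proposal is correct and follows essentially the same route as the paper. Both index the system by the metrizable shadowing factors of $(X,f)$, apply Corollary \ref{Coro:shafac} once for directedness (via the diagonal factor into $Z_1\times Z_2$) and once for point separation (via $x\mapsto (u(f^n x))_{n}$), and get surjectivity from the finite intersection property of the fibres; your indexing by closed invariant equivalence relations is just a tidier version of the paper's identification of factors up to conjugacy over $X$.
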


\begin{proof}
Consider the family
\[
\{(Z_i,g_i,\varphi_i):i\in I\},
\]
where each $(Z_i,g_i)$ is a metrizable dynamical system with the shadowing property, and
\[
\varphi_i:(X,f)\to (Z_i,g_i)
\]
is a factor map.\footnote{
We identify $(Z_1,g_1,\varphi_1)$ and $(Z_2,g_2,\varphi_2)$ whenever there exists a conjugacy
$
\nu:(Z_1,g_1)\to (Z_2,g_2)
$
such that
$
\varphi_2=\nu\circ \varphi_1.
$
}

\begin{claim}\label{c0}
The family $\{\varphi_i:i\in I\}$ separates points of $X$, that is, for any distinct $x,y\in X$, there exists $i\in I$ such that
\[
\varphi_i(x)\neq \varphi_i(y).
\]
\end{claim}

\begin{proof}[Proof of Claim \ref{c0}]
Let $\I$ be the unit closed interval.
Choose a continuous map
$
\psi:X\to \mathbb I
$
such that $\psi(x)\neq \psi(y)$. Define
\[
\widetilde{\psi}:X\to \mathbb I^\omega,
\qquad
z\mapsto \langle \psi(f^n(z))\rangle_{n\in\omega},
\]
and let $(Y,h)$ be the image of $\widetilde{\psi}$ endowed with the subspace dynamics induced by the shift on $\mathbb I^\omega$. Then $\widetilde{\psi}:(X,f)\to (Y,h)$ is a factor map.

By Corollary~\ref{Coro:shafac}, there exist $i\in I$ and a factor map
$
\nu:(Z_i,g_i)\to (Y,h)
$
such that
$
\widetilde{\psi}
=
\nu\circ \varphi_i.$

Since
\[
\psi
=
\pi_0\circ \widetilde{\psi}
=
\pi_0\circ \nu\circ \varphi_i,
\]
where $\pi_0:Y\to \mathbb I$ denotes the projection onto the $0$-th coordinate, the inequality $\psi(x)\neq \psi(y)$ implies that
$
\varphi_i(x)\neq \varphi_i(y).
$
\end{proof}
Define a partial order on $I$ by setting $i\le j$ if there exists a (necessarily unique) factor map
$
\varphi_i^j:(Z_j,g_j)\to (Z_i,g_i)
$
such that
$
\varphi_i=\varphi_i^j\circ \varphi_j.
$
It is straightforward to verify that $(I,\le)$ is a partially ordered set.

We claim that $(I,\le)$ is directed. 
Let $i,j\in I$. Define
\[
Y=\{(\varphi_i(x),\varphi_j(x)):x\in X\}\subseteq Z_i\times Z_j,
\]
and let
$
g:Y\to Y
$
be given by
\[
g(\varphi_i(x),\varphi_j(x))
=
(\varphi_i(f(x)),\varphi_j(f(x))).
\]
Then $Y$ is metrizable and $g$ is well defined. Moreover, the map $\psi$
\[
x\mapsto (\varphi_i(x),\varphi_j(x))
\]
is a factor map from $(X,f)$ onto $(Y,g)$.

By Corollary~\ref{Coro:shafac}, there exists $k\in I$ such that $\psi$ factors through $\varphi_k$, i.e., there exists a factor map
$
\nu:(Z_k,g_k)\to (Y,g)
$
such that
$
\psi=\nu\circ \varphi_k.
$
Composing with the coordinate projections $\pi_i$ and $\pi_j$ yields
\[
\varphi_i=\pi_i\circ \nu\circ \varphi_k,
\qquad
\varphi_j=\pi_j\circ \nu\circ \varphi_k,
\]
and therefore
\[
i\le k,
\qquad
j\le k.
\]
Hence $(I,\le)$ is directed.

If $i\le j\le k$, then both $\varphi_i^j\circ \varphi_j^k$ and $\varphi_i^k$ are factor maps from $(Z_k,g_k)$ to $(Z_i,g_i)$ satisfying
\[
\varphi_i
=
\varphi_i^j\circ \varphi_j
=
(\varphi_i^j\circ \varphi_j^k)\circ \varphi_k.
\]
By uniqueness of factorization, it follows that
$
\varphi_i^k=\varphi_i^j\circ \varphi_j^k.
$
It follows that the family $\{(Z_i,g_i):i\in I\}$ together with the bonding maps $\{\varphi_i^j:i\le j\}$ forms an inverse system.

Let $(Z,g)$ be the inverse limit of this system, with canonical projections
\[
\psi_i:(Z,g)\to (Z_i,g_i).
\]
Since $\varphi_i=\varphi_i^j\circ \varphi_j$ for all $i\le j$, there exists a unique continuous map
\[
\imath:(X,f)\to (Z,g)
\]
by the universal property, such that
\[
\psi_i\circ \imath=\varphi_i
\quad \text{for all } i\in I,
\qquad
g\circ \imath=\imath\circ f.
\]

We claim that $\imath$ is a homeomorphism. Since $X$ is compact and $Z$ is Hausdorff, it suffices to show that $\imath$ is bijective.

\medskip

\noindent\textbf{Injectivity.}
Let $x_1\ne x_2$ in $X$. Then there exists $i\in I$ such that
\[
\varphi_i(x_1)\ne \varphi_i(x_2),
\]
since the family $\{\varphi_i\}$ separates points of $X$, by Claim \ref{c0}. Hence
\[
\psi_i(\imath(x_1))\ne \psi_i(\imath(x_2)),
\]
so $\imath(x_1)\ne \imath(x_2)$.

\medskip

\noindent\textbf{Surjectivity.}
Let $z\in Z$. For each $i\in I$, define
$
X_i=\varphi_i^{-1}(\psi_i(z)).
$
If $i\leq j$, and $x\in X_j$, then 
\[\varphi_i(x)=\varphi^j_i(\varphi_j(x))=\varphi^j_i(\psi_j(z))=\psi_i(z).\]
Hence we have $X_j\subseteq X_i$.

We have also that each $X_i$ is nonempty since $\varphi_i$ is surjective. Moreover, if $J\subseteq I$ is finite, then by directedness there exists $k\in I$ such that $j\le k$ for all $j\in J$, hence
\[
X_k\subseteq \bigcap_{j\in J} X_j.
\]
Thus $\{X_i:i\in I\}$ has the finite intersection property. By compactness of $X$,
\[
\bigcap_{i\in I} X_i \ne \varnothing.
\]
Take $x$ in this intersection. Then $\psi_i(\imath(x))=\psi_i(z)$ for all $i$, hence $\imath(x)=z$.
\end{proof}

\section{Final Comments}\label{final}

\subsection{The non-metrizable extension problem}

We now turn to a possible non-metrizable analogue of
Theorem~\ref{Zero-ext}.
 We do not know whether the conclusion of that theorem
remains true for arbitrary compact Hausdorff spaces. More explicitly, the
following question is open to us.

\begin{question}\label{q:nonmetric_zero_ext}
Let $(X,f)$ be a compact Hausdorff dynamical system with shadowing. Does there
exist a compact zero-dimensional dynamical system $(Y,g)$ with shadowing and a
factor map $\pi:(Y,g)\to (X,f)$?
\end{question}

The metric proof of Theorem \ref{Zero-ext} uses a sequence of finer and finer
scales. In the non-metrizable case the natural set of scales need not contain a
cofinal sequence, and this is the main new obstruction.

Let $(X,\U)$ be a compact uniform space of weight
$\kappa\geq\omega$. We shall use the following standard viewpoint from Tukey
order; see Tukey \cite{Tukey} and
Todor\v{c}evi\'c \cite{Todorcevic}. A directed set of cofinality $\kappa$ is Tukey below
$[\kappa]^{<\omega}$, the set of finite subsets of $\kappa$ ordered by
inclusion. Thus, after passing to a cofinal base of the uniformity, we may index
the relevant entourages by $[\kappa]^{<\omega}$. We write this base as
$\{U_F:F\in[\kappa]^{<\omega}\}$, with $U_\varnothing=X\times X$, and choose it
so that if $F\subseteq G$, then $U_G$ is sufficiently small with respect to
$U_F$.

Assume now that $f:X\to X$ has shadowing. For each
$F\in[\kappa]^{<\omega}$, choose an entourage $V_F\subseteq U_F$ witnessing
$U_F$-shadowing. Choose also entourages $V'_F$ and $W_F$ such that
$3V'_F\subseteq V_F$ and
\[
    (x,y)\in W_F \quad\Longrightarrow\quad (f(x),f(y))\in V'_F.
\]
Since $X$ is compact, fix a finite $W_F$-net $A_F$ in $X$. Define
\[
    \Sigma_F=\{(a_i)_{i\in\omega}\in A_F^\omega:
    (f(a_i),a_{i+1})\in V_F\text{ for every }i\in\omega\}.
\]
Then $\Sigma_F$ is a shift of finite type.

A natural attempt is to imitate Theorem \ref{Zero-ext} by considering, for each
finite $F\subseteq\kappa$, arrays indexed by all subsets of $F$. Namely, one
may define $Y_F$ as a subshift of $\prod_{G\subseteq F}\Sigma_G$ consisting of
arrays $[a_i^G]_{G\subseteq F,\ i\in\omega}$ satisfying suitable vertical
coherence conditions. For instance, one may require
\[
    (a_i^G,a_i^H)\in 2U_G
    \quad\text{whenever }G\subseteq H\subseteq F\text{ and }i\in\omega.
\]
Since $F$ is finite, these are only finitely many zero-step conditions, together
with the one-step conditions defining the rows $\Sigma_G$. Hence each $Y_F$ is
again a shift of finite type.

If $F\subseteq F'$, there is a natural projection
$p_F^{F'}:Y_{F'}\to Y_F$ obtained by deleting the rows indexed by subsets not
contained in $F$. These maps commute with the shifts and form an inverse system.
The key question is whether they can be made surjective.

\begin{question}\label{q:surj_cubes}
Can the above construction be arranged so that, for all
$F\subseteq F'$ in $[\kappa]^{<\omega}$, the projection
$p_F^{F'}:Y_{F'}\to Y_F$ is surjective?
\end{question}

It is enough to understand the case $F'=F\cup\{\alpha\}$. Given an element of
$Y_F$, one would like to construct the new rows indexed by
$G\cup\{\alpha\}$, where $G\subseteq F$. The obvious strategy is to shadow each
old row $(a_i^G)_{i\in\omega}$ by an orbit $(f^i(x_G))_{i\in\omega}$ in $X$,
and then approximate this orbit by points in the finer net
$A_{G\cup\{\alpha\}}$. This gives a row in $\Sigma_{G\cup\{\alpha\}}$ which is
vertically close to the old row indexed by $G$.

The difficulty is that this does not automatically give coherence between the
new rows themselves. Already when $F=\{\beta\}$ and
$F'=\{\alpha,\beta\}$, one has to construct a row indexed by
$\{\alpha,\beta\}$ which is simultaneously compatible with the rows indexed by
$\{\alpha\}$ and $\{\beta\}$. The shadowing points used to construct these two
rows may be unrelated. Thus one is not merely lifting along a chain of scales;
one has to fill finite cubes of compatible symbolic data.

This is precisely where the Tukey type of the non-metrizable uniformity enters.
In the metric case the relevant scales may be taken along a cofinal sequence,
so each step has only one previous level to respect. For a general compact
Hausdorff space of weight $\kappa$, the natural cofinal type is
$[\kappa]^{<\omega}$, and passing from $F$ to $F\cup\{\alpha\}$ introduces a
whole family of new faces. The resulting cube-filling problem is not forced by
the shadowing property in any obvious way.

A positive answer to Question \ref{q:surj_cubes} would imply a positive answer
to Question \ref{q:nonmetric_zero_ext}. Indeed, the inverse limit
\[
    Y=\varprojlim\{(Y_F,\sigma_F),p_F^{F'}\}
\]
would then be an inverse limit of shifts of finite type with surjective bonding
maps, and hence a compact zero-dimensional system with shadowing. The usual
Cauchy-net argument, using the cofinal family $\{U_F\}$, would then define a
factor map from $Y$ onto $X$.

At present we do not know whether such a cube-filling argument can always be
carried out. It is possible that one needs a more refined construction of the
systems $Y_F$, or that Question \ref{q:nonmetric_zero_ext} has a negative
answer in full generality.

\subsection{The non-compact non-Archimedean case}
\label{subsec:nonarch}

In this subsection, we revisit Section~4.2 of Darji, Gon\c{c}alves and
Sobottka \cite{Darji}. We show that the hypotheses of their main
representation results, Theorems~4.2.8 and~4.2.9, can be weakened.
The proof of \cite[Theorem~4.2.8]{Darji} still proceeds, in essence, along
the lines of the argument used in \cite[Theorem~18]{GM}: one first obtains
an inverse-limit representation by orbit spaces and then uses finite
shadowing to replace these orbit spaces by pseudo-orbit spaces. By contrast,
the method developed in Section~\ref{revisit} gives a direct inverse-limit
representation by pseudo-orbit spaces. In the present non-compact setting,
compactness is replaced by completeness of a defining sequence. As a
consequence, finite shadowing is not needed for the representation itself,
and the assumptions in both \cite[Theorems~4.2.8 and~4.2.9]{Darji} can be
reduced.

Their use of tame defining sequences also admits a natural uniform
interpretation: the spaces under consideration are precisely separable
complete metrizable spaces whose uniformity has a countable base consisting
of equivalence relations. In other words, they are complete metrizable
\emph{non-Archimedean} uniform spaces.

 Let
$X$ be a zero-dimensional second-countable space. A \emph{defining sequence}
on $X$ is a sequence $\mathcal A=\{\mathcal U_n:n\in\omega\}$ of countable
clopen partitions such that $\mathcal U_{n+1}$ refines $\mathcal U_n$ and
$\bigcup_{n\in\omega}\mathcal U_n$ is a topological base for $X$. It is called
\emph{complete} if, whenever $U_n\in\mathcal U_n$ and
$U_{n+1}\subseteq U_n$ for every $n$, the intersection
$\bigcap_{n\in\omega}U_n$ is nonempty. Since the partitions form a base,
this intersection then consists of exactly one point.

Each partition $\mathcal U_n$ determines the equivalence relation
$E_n=\bigcup_{U\in\mathcal U_n}U\times U$. The family $\{E_n:n\in\omega\}$
is a base for a compatible non-Archimedean uniformity. Equivalently, it is
generated by the ultrametric $u_{\mathcal A}$ given by
$u_{\mathcal A}(x,x)=0$ and
\[
u_{\mathcal A}(x,y)
=
\frac{1}{1+\min\{n:\mathcal U_n[x]\neq\mathcal U_n[y]\}}
\qquad (x\neq y),
\]
where $\mathcal U_n[x]$ denotes the member of $\mathcal U_n$ containing
$x$. The defining sequence is complete if and only if
$u_{\mathcal A}$ is complete.

Suppose now that $d$ is a compatible metric on $X$. Following
\cite{Darji}, the sequence $\mathcal A$ is called \emph{tame} with respect
to $d$ if
\[
S_n=\sup\{\operatorname{diam}_d(U):U\in\mathcal U_n\}\longrightarrow 0
\]
and, for every $n$, there is $\rho_n>0$ such that
$d(U,V)\geq\rho_n$ whenever $U$ and $V$ are distinct members of
$\mathcal U_n$. This is equivalent to saying that $d$ and
$u_{\mathcal A}$ generate the same uniformity.

Let $f:X\to X$ be continuous. For each $n\in\omega$, define
$\mathcal{PO}(\mathcal U_n)$ to be the subspace of
$\mathcal U_n^\omega$ consisting of all sequences
$(U_i)_{i\in\omega}$ satisfying
$f(U_i)\cap U_{i+1}\neq\varnothing$ for every $i\in\omega$.
Thus $\mathcal{PO}(\mathcal U_n)$ is a one-step shift over the countable
alphabet $\mathcal U_n$. These shifts are called \emph{shifts of finite order} in
\cite{Darji}; in general they need not be shifts of finite type in the
finite-alphabet sense used earlier in this paper.

If $m\geq n$, refinement induces a one-block map
$\varphi_n^m:\mathcal{PO}(\mathcal U_m)\to
\mathcal{PO}(\mathcal U_n)$, obtained by replacing every member of
$\mathcal U_m$ by the unique member of $\mathcal U_n$ containing it. The
maps $\varphi_n^m$ commute with the shifts and form an inverse sequence.

We equip each alphabet $\mathcal U_n$ with the discrete uniformity and
$\mathcal U_n^\omega$ with the corresponding product uniformity. The shift
space $\mathcal{PO}(\mathcal U_n)$ is endowed with the induced subspace
uniformity, and the inverse limit is endowed with the subspace uniformity
inherited from the product $\prod_{n\in\omega}\mathcal{PO}(\mathcal U_n)$.
In particular, its underlying topology is the usual inverse-limit topology.

More explicitly, let $d_n$ be the usual prodiscrete ultrametric on
$\mathcal{PO}(\mathcal U_n)$, given by $d_n(\mathbf U, \mathbf V)=0$ if
$\mathbf U= \mathbf V$, and otherwise by
$d_n(\mathbf U, \mathbf V)=(1+k)^{-1}$, where $k$ is the least coordinate at
which $\mathbf U$ and $\mathbf V$ differ. The product uniformity may then be
metrized by
\[
d_{\Pi}(y,z)=\sup_{n\in\omega}2^{-n}d_n(y_n,z_n),
\]
and the inverse limit carries the restriction of $d_{\Pi}$.

The following is the direct non-compact counterpart of
Theorem~\ref{zerolift}.

\begin{theorem}\label{thm:nonarch-representation}
Let $X$ admit a complete defining sequence
$\mathcal A=\{\mathcal U_n:n\in\omega\}$, and let $f:X\to X$ be continuous.
Then $(X,f)$ is topologically conjugate to
\[
\varprojlim
\{(\mathcal{PO}(\mathcal U_n),\sigma_n),\varphi_n^m:
n\leq m<\omega\}.
\]
If $X$ is equipped with a metric $d$, the sequence $\mathcal A$ is tame
with respect to $d$, and $f$ is uniformly continuous, then the conjugacy
and its inverse are uniformly continuous.
\end{theorem}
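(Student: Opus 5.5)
We follow the proof of Theorem \ref{zerolift}, with compactness replaced by completeness of $\mathcal A$. For $x\in X$ and $n\in\omega$, let $p_n(x)=(\mathcal U_n[f^i(x)])_{i\in\omega}$ be the $\mathcal U_n$-itinerary of $x$. Since $f(\mathcal U_n[f^i(x)])$ contains $f^{i+1}(x)$, we have $p_n(x)\in\mathcal{PO}(\mathcal U_n)$. Clearly $p_n\circ f=\sigma_n\circ p_n$, and $\varphi_n^m\circ p_m=p_n$. Each $p_n$ is continuous: the $i$-th coordinate is locally constant, because $f^i$ is continuous and the members of $\mathcal U_n$ are open. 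Proposition \ref{prop:universal} is stated for compact systems, but its proof is purely set-theoretic together with the product topology. So the $p_n$ induce a continuous equivariant map $\theta:X\to Y:=\varprojlim\mathcal{PO}(\mathcal U_n)$ with $\varphi_n\circ\theta=p_n$. Injectivity follows as before: $\bigcup_n\mathcal U_n$ is a base of a Hausdorff space, so two distinct points are separated by some $\mathcal U_n$.

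For surjectivity, fix $y\in Y$ and write $\varphi_n(y)=(U_i^n)_{i}$. Compatibility of the bonding maps gives $U_i^{n+1}\subseteq U_i^n$. By completeness, $\bigcap_n U_i^n$ is a single point $x_i$, using that the partitions form a base. The key step is $f(x_i)=x_{i+1}$. Suppose not. Since $\bigcup_n\mathcal U_n$ is a base of refining partitions, there is $k$ with $\mathcal U_k[f(x_i)]\neq\mathcal U_k[x_{i+1}]$. By continuity of $f$ at $x_i$, there is $m\ge k$ with $f(\mathcal U_m[x_i])\subseteq\mathcal U_k[f(x_i)]$. Here we use that $\{\mathcal U_m[x_i]\}_m$ is a neighbourhood base at $x_i$, which holds because the sequence refines and forms a base. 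Then $U_i^m=\mathcal U_m[x_i]$ and $U_{i+1}^m\subseteq\mathcal U_k[x_{i+1}]$, so $f(U_i^m)\cap U_{i+1}^m=\varnothing$. This contradicts $\varphi_m(y)\in\mathcal{PO}(\mathcal U_m)$. Hence $x_i=f^i(x_0)$ and $\theta(x_0)=y$.

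Compactness is no longer available to make $\theta^{-1}$ continuous, so we prove it directly. Take $y,y'$ that are close in $Y$. Closeness in the inverse-limit topology means $\varphi_n(y)$ and $\varphi_n(y')$ agree in coordinate $0$ for $n\le N$. Then $\theta^{-1}(y)$ and $\theta^{-1}(y')$ lie in the same member of $\mathcal U_N$, that is, $u_{\mathcal A}$-distance at most $1/(N+1)$. Because $\bigcup\mathcal U_n$ is a base of refining partitions, the sets $\mathcal U_N[x]$ form neighbourhood bases, so this gives continuity.

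For the uniform statement, tameness means $d$ and $u_{\mathcal A}$ are uniformly equivalent, so we may work with $u_{\mathcal A}$. The argument for $\theta^{-1}$ just given is already uniform: $d_\Pi(y,y')$ small forces agreement of the zeroth coordinates up to level $N$, which gives $u_{\mathcal A}$-closeness. For $\theta$, given $n$ and $k$ we need $\delta$ such that $d(x,x')<\delta$ implies $\mathcal U_n[f^i(x)]=\mathcal U_n[f^i(x')]$ for all $i\le k$. Uniform continuity of $f^i$ for $i\le k$ gives $d(f^ix,f^ix')<\rho_n$, and tameness then forces the same $\mathcal U_n$-cell. Only finitely many $n$ matter for $d_\Pi$ up to a given error $2^{-n}$, so $\theta$ is uniformly continuous.

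I expect the main obstacle to be this uniform part, together with carefully justifying that continuity of $f$ gives the separation needed in the key step for countable (non-finite) partitions. The other steps transfer from Theorem \ref{zerolift} essentially verbatim.
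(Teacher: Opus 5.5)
Your proposal is correct and follows essentially the same route as the paper: the itinerary map $\theta$, surjectivity via completeness together with the separation argument showing $f(x_i)=x_{i+1}$, a direct proof that $\theta^{-1}$ is continuous using agreement of zeroth symbols, and the uniform estimates via the separation constants $\rho_n$ and the diameters $S_n$. The only differences are cosmetic: you obtain $\theta$ from the universal property, rightly noting that compactness plays no role there, and in the key step you use partition cells $\mathcal U_k[\cdot]$ instead of arbitrary clopen neighbourhoods.
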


\begin{proof}
Let $(Y,g)$ denote the inverse limit in the statement. Define
$\theta:X\to Y$ by
\[
\theta(x)
=
\bigl((\mathcal U_n[f^i(x)])_{i\in\omega}\bigr)_{n\in\omega}.
\]
The family on the right is compatible under refinement, and each of its
coordinates belongs to $\mathcal{PO}(\mathcal U_n)$. Moreover,
$\theta\circ f=g\circ\theta$.

The map $\theta$ is injective because the members of the partitions
$\mathcal U_n$ form a base and hence separate points. Its continuity follows
from the continuity of the finite iterates of $f$: prescribing finitely many
coordinates of finitely many itineraries amounts to requiring that finitely
many iterates of a point belong to prescribed clopen sets.

We prove that $\theta$ is onto. Let
$y=((U_i^n)_{i\in\omega})_{n\in\omega}\in Y$. Compatibility under refinement
gives $U_i^{n+1}\subseteq U_i^n$ for every $i,n\in\omega$. By completeness
of the defining sequence, for each fixed $i$ there is a unique point $x_i$
such that
$x_i\in\bigcap_{n\in\omega}U_i^n$.

We claim that $f(x_i)=x_{i+1}$ for every $i$. Suppose otherwise. Choose
disjoint clopen neighbourhoods $O$ and $B$ of $f(x_i)$ and $x_{i+1}$,
respectively. By continuity of $f$, there is a clopen neighbourhood $A$ of
$x_i$ such that $f(A)\subseteq O$. Since the partitions form a base, for
some sufficiently large $n$ we have $U_i^n\subseteq A$ and
$U_{i+1}^n\subseteq B$. Consequently,
$f(U_i^n)\cap U_{i+1}^n=\varnothing$, contradicting
$(U_j^n)_{j\in\omega}\in\mathcal{PO}(\mathcal U_n)$. Hence
$f(x_i)=x_{i+1}$.

It follows that $x_i=f^i(x_0)$ for every $i$, and therefore
$\theta(x_0)=y$. Thus $\theta$ is bijective.

To see directly that $\theta^{-1}$ is continuous, let $O$ be a neighbourhood
of $x=\theta^{-1}(y)$. Choose $n$ such that
$\mathcal U_n[x]\subseteq O$. Every element of $Y$ whose zeroth symbol at
level $n$ agrees with that of $y$ is mapped by $\theta^{-1}$ into
$\mathcal U_n[x]$. Hence $\theta^{-1}$ is continuous.

Assume now that $\mathcal A$ is tame with respect to $d$ and that $f$ is
uniformly continuous. We show that $\theta$ is uniformly continuous with
respect to $d$ and $d_{\Pi}$.

Let $\varepsilon>0$. Choose $N,K\in\omega$ such that
$2^{-(N+1)}<\varepsilon$ and $(K+1)^{-1}<\varepsilon$. Set
$\rho=\min\{\rho_n:n\leq N\}$. Since $f$ is uniformly continuous, so are
the finitely many maps $f^i$, $0\leq i<K$. Hence there exists $\delta>0$
such that
\[
d(x,x')<\delta
\quad\Longrightarrow\quad
d(f^i(x),f^i(x'))<\rho
\quad\text{for every }0\leq i<K.
\]
If $n\leq N$, distinct members of $\mathcal U_n$ are
$\rho_n$-separated. Since $\rho\leq\rho_n$, it follows that
$\mathcal U_n[f^i(x)]=\mathcal U_n[f^i(x')]$ for every $n\leq N$ and
$0\leq i<K$. Thus the first $K$ symbols of the $n$th coordinates of
$\theta(x)$ and $\theta(x')$ agree, and consequently
$d_n(\theta(x)_n,\theta(x')_n)\leq(K+1)^{-1}$ for every $n\leq N$.

For $n>N$, we have
$2^{-n}d_n(\theta(x)_n,\theta(x')_n)\leq 2^{-n}\leq 2^{-(N+1)}$.
Therefore
\[
d_{\Pi}(\theta(x),\theta(x'))
\leq
\max\{(K+1)^{-1},2^{-(N+1)}\}
<\varepsilon.
\]
This proves that $\theta$ is uniformly continuous.

We finally prove that $\theta^{-1}$ is uniformly continuous. Let
$\varepsilon>0$. Since $\mathcal A$ is tame, choose $n\in\omega$ such
that $S_n<\varepsilon$, and put $\eta=2^{-n}$. Suppose that
$y,z\in Y$ satisfy $d_{\Pi}(y,z)<\eta$. Then
\[
2^{-n}d_n(y_n,z_n)<2^{-n},
\]
and hence $d_n(y_n,z_n)<1$. By the definition of the prodiscrete metric,
the zeroth symbols of $y_n$ and $z_n$ agree.

Let $x=\theta^{-1}(y)$ and $x'=\theta^{-1}(z)$. The equality of these
zeroth symbols means that $x$ and $x'$ belong to the same member of
$\mathcal U_n$. It follows that
$d(x,x')\leq S_n<\varepsilon$. Hence $\theta^{-1}$ is uniformly continuous,
and $\theta$ is a uniform conjugacy.
\end{proof}

The proof is parallel to that of Theorem~\ref{zerolift}. Compactness was used
there to ensure that a compatible family of partition elements has a
nonempty intersection. In the present setting, this is exactly the role
played by completeness of the defining sequence.

Theorem~\ref{thm:nonarch-representation} also separates the symbolic
representation from the shadowing property. In particular, the finite
shadowing hypothesis in \cite[Theorem~4.2.8]{Darji} is not needed for the
inverse-limit representation itself. Shadowing is instead detected by the
Mittag-Leffler condition.

The following result sharpens \cite[Theorem~4.2.9]{Darji}. The forward
implication requires neither completeness of the defining sequence nor
uniform continuity of the map, while in the converse direction our direct
inverse-limit representation allows us to remove the finite shadowing
assumption.

\begin{corollary}\label{cor:nonarch-ML}
Let $(X,d,f)$ be a dynamical system, and let
$\mathcal A=\{\mathcal U_n:n\in\omega\}$ be a tame defining sequence of
$(X,d)$. Consider the canonical inverse sequence
\[
\bigl\{(\mathcal{PO}(\mathcal U_n),\sigma_n),\varphi_n^m:
n\leq m<\omega\bigr\}.
\]
Then the following statements hold.
\begin{enumerate}
\item[(i)] If $f$ has the shadowing property, then the canonical inverse
sequence satisfies the Mittag--Leffler condition.

\item[(ii)] Suppose, in addition, that $\mathcal A$ is complete and that
$f$ is uniformly continuous. If the canonical inverse sequence satisfies
the Mittag--Leffler condition, then $f$ has the shadowing property.
\end{enumerate}

Consequently, if $\mathcal A$ is complete and $f$ is uniformly continuous,
then $f$ has the shadowing property if and only if the canonical inverse
sequence satisfies the Mittag--Leffler condition.
\end{corollary}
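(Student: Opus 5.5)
For (i), I would mimic Lemma \ref{sur} and the last paragraph of the proof of Theorem \ref{zerolift}, with tameness translating partitions into metric scales. Fix $n$. Tameness gives $\rho_n>0$ with distinct members of $\mathcal U_n$ at $d$-distance at least $\rho_n$. Apply shadowing with $\varepsilon=\rho_n$ to get $\delta>0$, and use $S_m\to0$ to choose $k\ge n$ with $S_k<\delta$.

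I claim that $\varphi_n^m(\mathcal{PO}(\mathcal U_m))$ is the same set for every $m\ge k$, namely the itinerary set $\mathcal O(\mathcal U_n)=\{(\mathcal U_n[f^i(x)])_i:x\in X\}$.
\begin{itemize}
\item[$\supseteq$:] The itinerary of any $x$ at level $m$ lies in $\mathcal{PO}(\mathcal U_m)$ and maps onto its level-$n$ itinerary.
\item[$\subseteq$:] Given $(V_i)\in\mathcal{PO}(\mathcal U_m)$, pick $x_i\in V_i\cap f^{-1}(V_{i+1})$. Then $f(x_i)$ and $x_{i+1}$ both lie in $V_{i+1}$, so $d(f(x_i),x_{i+1})\le S_m\le S_k<\delta$. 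Hence $(x_i)$ is a $\delta$-pseudo-orbit and is $\rho_n$-shadowed by some $x$. Since $d(f^i(x),x_i)<\rho_n$, the points $f^i(x)$ and $x_i$ lie in the same member of $\mathcal U_n$, which is the one containing $V_i$. So the image is the level-$n$ itinerary of $x$.
\end{itemize}
This part uses neither completeness nor uniform continuity; strictness of $d(f(x_i),x_{i+1})<\delta$ can be arranged by shrinking $\delta$ slightly, if needed.

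For (ii), assume ML. Given $\varepsilon>0$, pick $n$ with $S_n<\varepsilon$, and let $k\ge n$ witness ML at level $n$. Set $\delta$ smaller than $\rho_k$, and also small enough that $d(x,y)<\delta$ forces $x,y$ into the same member of $\mathcal U_k$ (this is just $\delta\le\rho_k$). Take a $\delta$-pseudo-orbit $(x_i)$ and let $V_i=\mathcal U_k[x_i]$. Since $d(f(x_i),x_{i+1})<\delta\le\rho_k$, the point $f(x_i)$ lies in $V_{i+1}$, so $f(V_i)\cap V_{i+1}\neq\varnothing$ and $(V_i)\in\mathcal{PO}(\mathcal U_k)$.

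By ML, $\varphi_n^k(V_i)_i$ lies in $\varphi_n^m(\mathcal{PO}(\mathcal U_m))$ for every $m\ge k$. The main obstacle is to convert this into a single point of the inverse limit, because without compactness a "stable image" argument does not automatically give a thread. I would use the standard countable ML argument:
\begin{itemize}
\item Let $I_n=\bigcap_m\varphi_n^m(\mathcal{PO}(\mathcal U_m))$ be the stable images. Then ML at every level gives $\varphi_n^{n+1}(I_{n+1})=I_n$.
\item Recursively lift $w_n=\varphi_n^k(V_i)\in I_n$ to a compatible sequence $w_{n+1}\in I_{n+1}$, $w_{n+2}\in I_{n+2},\dots$.
\item For the levels between $n$ and $k$, apply the same argument starting from level $k$ instead, choosing the lift to agree with $(V_i)$ there. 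This needs ML at level $k$ as well, which holds by hypothesis.
\end{itemize}
This yields a thread $y\in Y$ whose level-$n$ coordinate is $(\mathcal U_n[x_i])_i$. Theorem \ref{thm:nonarch-representation}, which needs completeness, gives $x$ with $\theta(x)=y$, so $f^i(x)\in\mathcal U_n[x_i]$ and therefore $d(f^i(x),x_i)\le S_n<\varepsilon$. Uniform continuity of $f$ is used only to ensure the setting of Theorem \ref{thm:nonarch-representation} and the uniform equivalence of $d$ with $u_{\mathcal A}$.

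The final "consequently" clause is the conjunction of (i) and (ii).
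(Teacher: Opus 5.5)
Your proposal is correct, but for (ii) it takes a more hands-on route than the paper, and for (i) it supplies a proof where the paper only gives a citation.

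\emph{Part (i).} The paper simply cites \cite[Theorem 4.2.9(i)]{Darji}. Your argument is the metric version of Lemma~\ref{sur}. The bound $S_m\le S_k<\delta$ turns $\mathcal U_m$-pseudo-orbits into $\delta$-pseudo-orbits; strictness is automatic here, so no shrinking of $\delta$ is needed. The $\rho_n$-separation of $\mathcal U_n$ then turns $\rho_n$-shadowing into agreement of $\mathcal U_n$-itineraries. As you say, this uses neither completeness nor uniform continuity.

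\emph{Part (ii).} The paper argues modularly:
\begin{enumerate}
\item the uniform conjugacy of Theorem~\ref{thm:nonarch-representation};
\item shadowing of each $\mathcal{PO}(\mathcal U_n)$ \cite[Proposition 2.3.5]{Darji};
\item preservation of shadowing under ML inverse limits \cite[Theorem 4.1.7]{Darji};
\item invariance of shadowing under uniform conjugacy.
\end{enumerate}
You unwind this directly. A $\rho_k$-pseudo-orbit has its $\mathcal U_k$-itinerary in $\mathcal{PO}(\mathcal U_k)$. Its level-$n$ image lies in the stable image $I_n$. The countable ML lifting produces a thread. Surjectivity of $\theta$ then yields the shadowing point.

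What each route buys: the paper's is shorter given the cited results and mirrors the compact picture around Theorem~\ref{thm:GM8}. Yours is self-contained and uses only the surjectivity of $\theta$. That surjectivity needs completeness and continuity of $f$, but not uniform continuity.

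Your closing remark about where uniform continuity enters is therefore inaccurate. It does not enter at all: the uniform equivalence of $d$ and $u_{\mathcal A}$ comes from tameness, and the topological part of Theorem~\ref{thm:nonarch-representation} needs only continuity. Your argument in fact proves (ii), and hence the final equivalence, for merely continuous $f$ when $\mathcal A$ is complete and tame.

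One step should be removed: the third bullet, which proposes lifting from level $k$ with the lift agreeing with $(V_i)$.
\begin{itemize}
\item It is unjustified. $(V_i)\in\mathcal{PO}(\mathcal U_k)$ need not belong to the stable image $I_k$: ML at level $k$ says the images $\varphi_k^m(\mathcal{PO}(\mathcal U_m))$ stabilize, not that every element of $\mathcal{PO}(\mathcal U_k)$ survives to all finer levels.
\item It is also unnecessary. The recursive lifting from $w_n\in I_n$ already supplies every level above $n$, including those between $n$ and $k$. Afterwards you only use the level-$n$ coordinate of the thread.
\end{itemize}
With that bullet deleted, the proof is complete.
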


\begin{proof}
(i) has been proved in \cite[Theorem 4.2.9(i)]{Darji}.

For (ii), Theorem~\ref{thm:nonarch-representation} gives a uniform
conjugacy between $(X,d,f)$ and the inverse limit of the canonical inverse
sequence. Each $\mathcal{PO}(\mathcal U_n)$ is a one-step shift over a
countable alphabet and therefore has the shadowing property by
\cite[Proposition~2.3.5]{Darji}. The bonding maps are uniformly continuous,
and the inverse sequence satisfies the Mittag--Leffler condition. Hence its
inverse limit has the shadowing property by
\cite[Theorem~4.1.7]{Darji}. Since shadowing is preserved by uniform
conjugacy, $f$ has the shadowing property.
\end{proof}

\begin{remark}\label{rem:general-nonarch}
Although we have formulated the results of this subsection in the metrizable
setting, metrizability is not essential. The same arguments apply to an
arbitrary Hausdorff non-Archimedean uniform space.

More precisely, one replaces the countable tame defining sequence by a
directed base $\mathcal B$ of equivalence entourages. For each
$E\in\mathcal B$, the set of $E$-classes gives a discrete clopen partition
$\mathcal U_E$, and the corresponding pseudo-orbit shifts form an inverse
system indexed by $\mathcal B$, ordered by reverse inclusion. Completeness
of the defining sequence is replaced by completeness of the uniform space,
while metric shadowing and uniform conjugacy are replaced by their
entourage-based versions. With these modifications,
Theorem~\ref{thm:nonarch-representation} and
Corollary~\ref{cor:nonarch-ML}, including the weaker assumptions in
part~{\rm (i)}, remain valid. We omit the routine details.
\end{remark}



\end{document}